\newtheorem{theorem}{Theorem}[section]
\newtheorem{remark}{Remark}[section]
\newtheorem{lemma}{Lemma}[section]
\newcommand{\aver}[1]{\left\{\!\!\left\{#1\right\}\!\!\right\}}
\newcommand{\jump}[1]{\left[\!\left[#1\right]\!\right]}
\newcommand{\bigjump}[1]{\left[\!\!\left[#1\right]\!\!\right]}
\begin{document}
\arraycolsep=1pt
\baselineskip 11pt
\title{\Large\bf Partially Penalized Immersed Finite Element Methods \\
for Parabolic Interface Problems
\thanks{This research is partially supported by the NSF grant DMS-1016313}
\thanks{The second author is supported by a project of Shandong province higher educational science and technology program (J14LI03), P.R.China}}
\author{Tao Lin\thanks{Department of Mathematics, Virginia Tech, Blacksburg, VA
24061, tlin@math.vt.edu},\ \ Qing Yang\thanks{School of Mathematical Science, Shandong
Normal University, Jinan 250014, P. R. China, sd\_yangq@163.com}\ \ and Xu Zhang\thanks{Department of Mathematics,
Purdue University, West Lafayette, IN, 47907, xuzhang@purdue.edu}}
\date{}
\maketitle
%\vskip 0.2in
\baselineskip 15pt
\begin{center}
\begin{minipage}{150mm}{\small
{\bf Abstract}\hspace{2mm} We present partially penalized immersed finite
element methods for solving parabolic interface problems on Cartesian meshes.
Typical semi-discrete and fully discrete schemes are discussed. Error estimates in an energy norm are derived. Numerical examples are provided to support theoretical analysis.
\par
 \vspace{3mm}
 {\bf Key words:}  parabolic interface problems, Cartesian mesh methods, partially penalized immersed finite element, error estimation.
\par
\vspace{3mm}
%{\bf Chinese Library Classification}\
%\ O241.82\\
{\bf 2010 Mathematics Subject Classifications
}\hspace{2mm}65M15,\hspace{2mm}65M60}
\end{minipage}
\end{center}
%\vspace{3mm}

%\section*{ {\LARGE\bf 1}\zihao{4}\heiti \hspace{2mm}ÒýÑÔ}
\section{Introduction}
%\SEC{1 \hspace{2mm}Introduction} \setcounter{section}{1}
%\hspace{6mm}
\par
\noindent
In this article, we consider the following parabolic equation with the Dirichlet boundary condition
\begin{eqnarray}
&&\frac{\partial u}{\partial t}-\nabla\cdot(\beta\nabla u)=f(X,t),\ \ X = (x,y)\in\Omega^+\cup\Omega^-,\ t\in  (0,T], \label{eq:parab_eq}\\
&&u|_{\partial\Omega}=g(X,t),\ \ t\in  (0,T], \label{eq:parab_eq_bc}\\
&&u|_{t=0}=u_0(X),\ \ X\in\Omega. \label{eq:parab_eq_ic}
\end{eqnarray}
Here, $\Omega$ is a rectangular domain or a union of several rectangular domains in $\mathbb{R}^2$. The interface $\Gamma\subset \overline{\Omega}$ is a smooth curve separating $\Omega$ into two sub-domains $\Omega^-$ and $\Omega^+$ such that $\overline\Omega=\overline{\Omega^-\cup\Omega^+\cup\Gamma}$, see the left plot in Figure \ref{fig: domain}.
%We assume $u_0\in L^2(\Omega)$ and $f\in L^2(0,T;L^2(\Omega))$, $g\in L^2(0,T;H^{1/2}(\partial \Omega))$ on the time interval $(0,T]$.
The diffusion coefficient $\beta$ is discontinuous across the interface, and it is assumed to be a piecewise constant function such that
\[
\beta(X)=\left\{
\begin{array}{ll}
\beta^-,&~~ X \in\Omega^-, \\
\beta^+,&~~ X \in\Omega^+,
\end{array}
\right.
\]
and min$\{\beta^-,\beta^+\}>0$. We assume that the exact solution $u$ to the above initial boundary value problem
satisfies the following jump conditions across the interface $\Gamma$:
\begin{eqnarray}
 &&\jump{u}_{\Gamma}=0, \label{eq:parab_eq_jump_1} \\
 &&\bigjump{\beta\frac{\partial u}{\partial \mathbf{n}}}_{\Gamma}=0. \label{eq:parab_eq_jump_2}
\end{eqnarray}

%\par
% In this paper, we aim for presenting discontinuous
%Galerkin immersed finite element (DG-IFE) methods for solving the
%problem \eqref{eq:parab_eq}-\eqref{eq:parab_eq_jump_2}.
\begin{figure}[htbp]
\centering
\includegraphics[width=.3\textwidth]{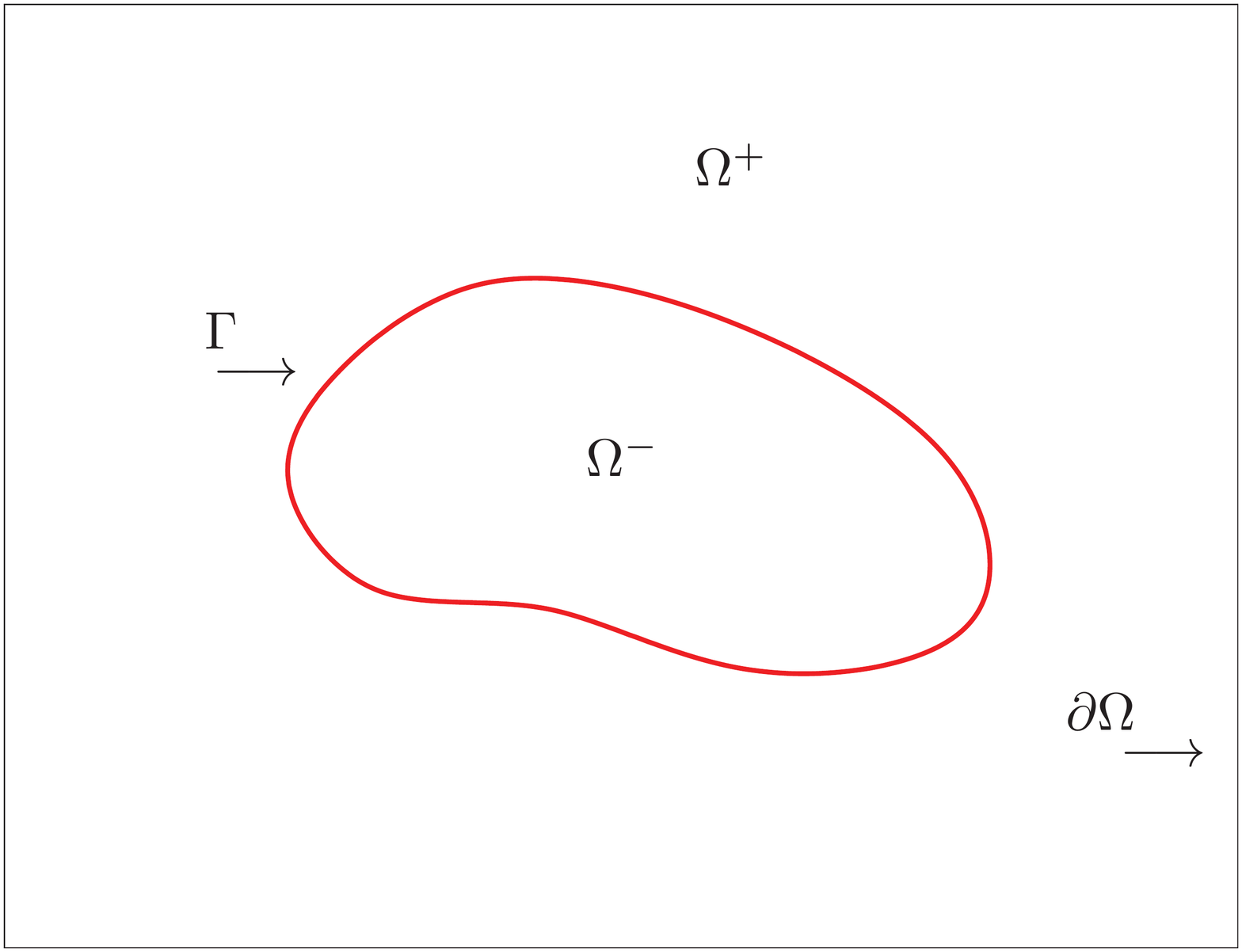}
\includegraphics[width=.3\textwidth]{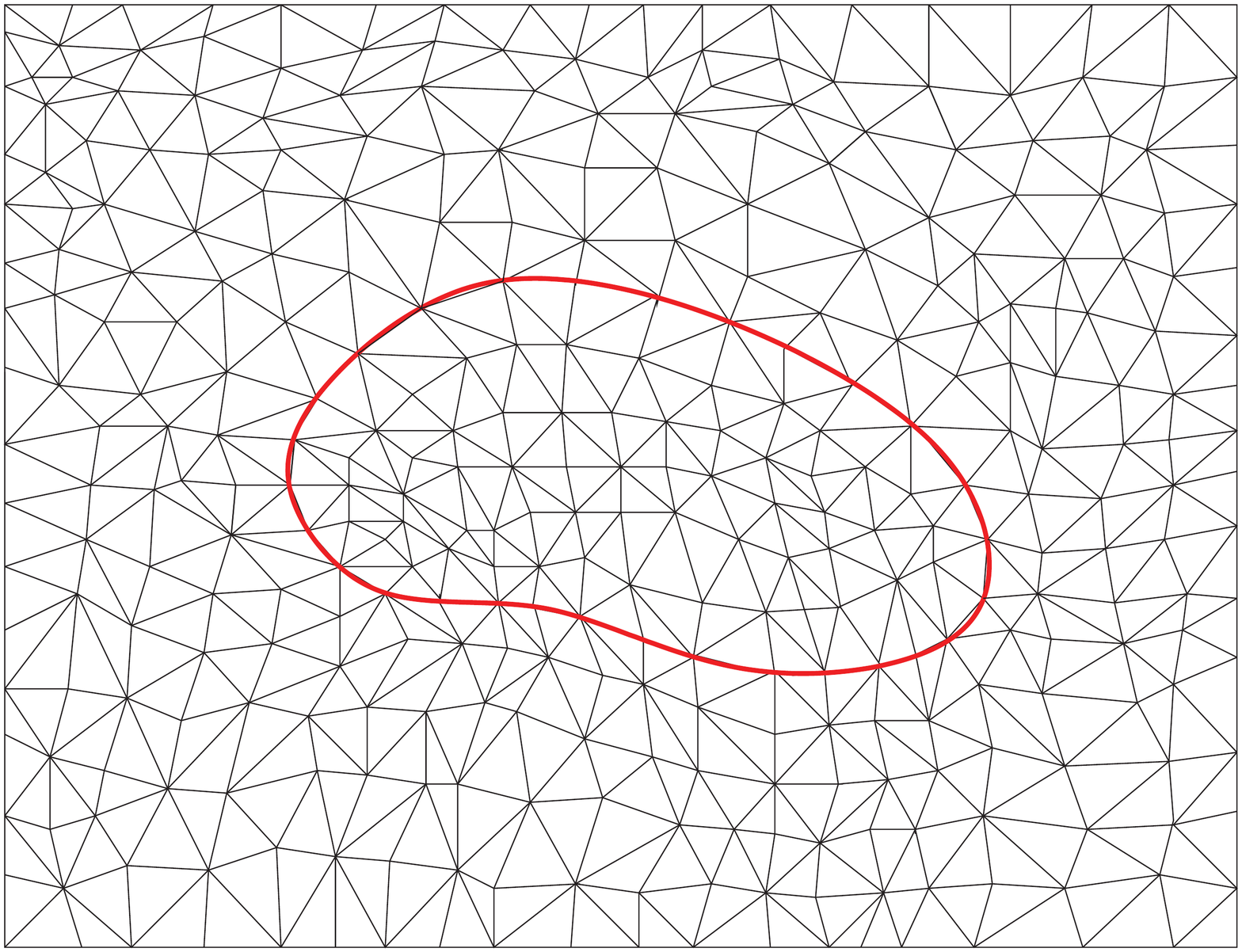}
\includegraphics[width=.3\textwidth]{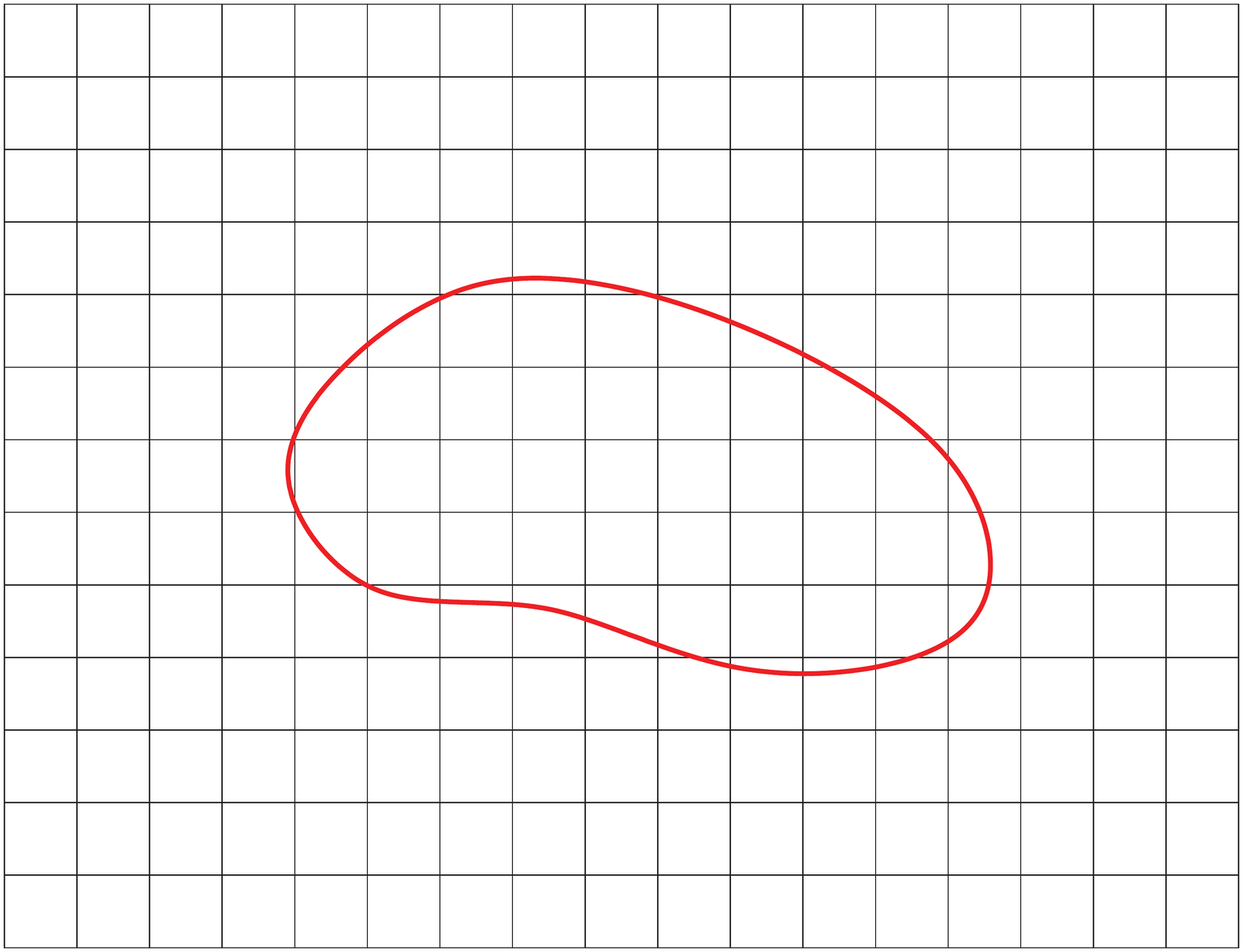}
\caption{\small The simulation domain $\Omega$ (left), body-fitting mesh (middle), and non-body-fitting mesh (right).}
\label{fig: domain}
\end{figure}

Interface problems appear in many applications of engineering and science; therefore, it is of great importance to solve interface problems efficiently. When conventional finite element methods are employed to solve interface problems, body-fitting meshes (see the mid plot in Figure \ref{fig: domain}) have to be used in order to guarantee their optimal convergence
\cite{
Babuska_Elliptic_Discontinuous,
Babuska_Osborn_Bad_FEM,
JBramble_JKing_FEM_Interface,
ZChen_JZou_FEM_Elliptic}.
Such a restriction hinders their applications in some situations because it prevents the use of Cartesian mesh unless the interface has a very simple geometry such as an axis-parallel straight line. Recently, immersed finite element (IFE) methods have been developed to overcome such a limitation of traditional finite element methods for solving interface problems, see \cite{
Chou_Kwak_Wee_IFE_Triangle_Analysis,
XHe_Thesis_Bilinear_IFE,
XHe_TLin_YLin_Bilinear_Approximation,
XHe_TLin_YLin_Convergence_IFE,
ZLi_IIM_FE,
ZLi_TLin_YLin_RRogers_linear_IFE,
ZLi_TLin_XWu_Linear_IFE,
TLin_YLin_RRogers_MRyan_Rectangle,
TLin_YLin_XZhang_PPIFE_Elliptic,
TLin_DSheen_XZhang_RQ1_IFE_Elasiticity,
XZhang_PHDThesis}.
The main feature of IFE methods is that they can use interface independent meshes; hence, structured or even Cartesian meshes can be used to solve problems with nontrivial interface geometry (see the right plot in Figure \ref{fig: domain}).
Most of IFE methods are developed for stationary interface problems. There are a few literatures of IFEs on time-dependent interface problems.  For instance, an immersed Eulerian-Lagrangian localized adjoint method was developed to treat transient advection-diffusion equations with interfaces in \cite{Wang_Wang_Yu_Immersed_EL_Interfaces}. In \cite{TLin_DSheen_IFE_Laplace}, IFE methods were applied to parabolic interface problem together with the Laplacian transform. Parabolic problems with moving interfaces were considered in \cite{XHe_TLin_YLin_XZhang_Moving_CNIFE, TLin_YLin_XZhang_MoL_Nonhomo, TLin_YLin_XZhang_IFE_MoL} where Crank-Nicolson-type fully discrete IFE methods and
IFE method of lines were derived through the Galerkin formulation.

For elliptic interface problems, classic IFE methods in Galerkin formulation \cite{ZLi_TLin_YLin_RRogers_linear_IFE,
ZLi_TLin_XWu_Linear_IFE, TLin_YLin_RRogers_MRyan_Rectangle} can usually converge to the exact solution with optimal order in $H^1$ and $L^2$ norm. Recently, the authors in \cite{TLin_YLin_XZhang_PPIFE_Elliptic, XZhang_PHDThesis} observed that their orders of convergence in both $H^1$ and $L^2$ norms can sometimes deteriorate when the mesh size becomes very small, and this order degeneration might be the consequence of the discontinuity of IFE functions across \textit{interface edges} (edges intersected with the interface). Note that IFE functions in \cite{ZLi_TLin_YLin_RRogers_linear_IFE, ZLi_TLin_XWu_Linear_IFE, TLin_YLin_RRogers_MRyan_Rectangle} are constructed so that they are continuous within each interface element. On the boundary of an interface element, the continuity of these IFE functions is only imposed on two endpoints of each edge. This guarantees the continuity of IFE functions on non-interface edges. However, an IFE function is a piecewise polynomial on each interface edge; hence it is usually discontinuous on interface edges. This discontinuity depends on the interface location and the jump of coefficients, and could be large for certain configuration of interface element and diffusion coefficient. When
the mesh is refined, the number of interface elements becomes larger, and such discontinuity over interface edges might be a factor negatively impacting on the global convergence.

%This is because the discontinuity of IFE functions can be quite large at interface edges.
To overcome the order degeneration of convergence, a partially penalized immersed finite element (PPIFE) formulation was introduced in \cite{TLin_YLin_XZhang_PPIFE_Elliptic, XZhang_PHDThesis}. In the new formulation, additional stabilization terms generated on interface edges are added to the finite element equations that can penalize the
discontinuity of IFE functions across interface edges. Since the number of interface edges is much smaller than the total number of elements of a Cartesian mesh, the computational cost for generating those partial penalty terms is negligible. For elliptic interface problems, the PPIFE methods can effectively reduce errors around interfaces; hence, maintain the optimal convergence rates under mesh refinement without degeneration.
%Optimal rate of convergence was proved in an energy norm with sufficient regularity of solution space.

%Discontinuous Galerkin (DG) finite element methods were introduced in 1970s \cite{Babuska_Zlamal_Nonconform_FEM_Penalty,Reed_Hill_Triangular_Transport_EQ}. Because the discontinuous approximation functions are employed,  DG methods have such advantages as high order of accuracy, high parallelizability, localizability, and easy handling of complicated geometries; therefore, DG methods have been used widely \cite{ Arnold_Brezzi_Cockburn_Marini_Unified_Analysis_DG, Bassi_Rebay_DFEM_NS,Baumann_Oden_hp_FEM_Convection_Diffusion,Cockburn_Hou_Shu_RKDG_Conservation_IV, Cockburn_Shu_LDG_Time_Convection_Diffusion,Douglas_Dupont_Penalty_Elliptic_Parabolic,Riviere_DG_book, Riviere_Wheeler_Girault_DG}.

Our goal here is to develop PPIFE methods for the parabolic interface problem \eqref{eq:parab_eq} - \eqref{eq:parab_eq_jump_2} and to derive the {\it a priori} error estimates for these methods. We present the semi-discrete method and two prototypical fully discrete methods, \emph{i.e.}, the backward Euler method and Crank-Nicolson method in Section 2. In Section 3, the {\it a priori} error estimates are derived for these methods which indicate the optimal convergence from the point of view of polynomials used in the involved IFE subspaces. Finally, numerical examples are provided in Section 4 to validate the theoretical estimates.

In the discussion below, we will use a few general assumptions and notations. First, from now on, we will tacitly assume that the interface problem has a homogeneous boundary condition, \emph{i.e.}, $g = 0$ for the simplicity of presentation. The methods and related analysis can be easily extended to problems with a non-homogeneous boundary
condition through a standard procedure. Second, we will
adopt standard notations and norms of Sobolev spaces. For $r\geq 1$ , we define the
following function spaces:
\[
\tilde H^{r}(\Omega)=\{v: v|_{\Omega^s}\in
H^{r}(\Omega^s), s=+\ \text{or}\ -\}
\]
equipped with the norm
\[
\|v\|^2_{\tilde H^{r}(\Omega)}=\|v\|^2_{H^{r}(\Omega^-)}+\|v\|^2_{H^{r}(\Omega^+)},~~~\forall v\in
\tilde H^{r}(\Omega).
\]
For a function $z(X,t)$ with space variable $X=(x,y)$ and time variable $t$, we consider it as a mapping from the time
interval $[0,T]$ to a normed space $V$ equipped with the norm $\|\cdot\|_{V}$. In particular, for an integer $k\geq 1$, we define
\[
L^k(0,T;V)=\left\{z: [0,T]\rightarrow V ~~\text{measurable, such that}  \int_{0}^{T}\|z(\cdot,t)\|_V^kdt<\infty\right\}
\]
with
\[
\|z\|_{L^k(0,T;V)}=\left(\int_{0}^{T}\|z(\cdot,t)\|_V^kdt\right)^{1/k}.
\]
Also, for $V = \tilde H^{r}(\Omega)$, we will use the
standard function space $H^p(0,T; \tilde H^{r}(\Omega))$ for $p \geq 0, r \geq 1$.

\par
In addition, we will use $C$ with or without subscript to denote a generic positive constant which may have different
values according to its occurrence. For simplicity, we will use $u_t$, $u_{tt}$, etc., to denote the partial derivatives
of a function $u$ with
respect to the time variable $t$.
%
%$\frac{\partial u}{\partial t}$,
%$\frac{\partial^2 u}{\partial t^2}$, etc.
\par

\section{Partially Penalized Immersed Finite Element Methods}
%\SEC{2 \hspace{2mm}Discontinuous Galerkin immersed finite element methods}
\setcounter{section}{2}\setcounter{equation}{0}
In this section, we first derive a weak formulation of the parabolic interface problem \eqref{eq:parab_eq} - \eqref{eq:parab_eq_jump_2} based on Cartesian meshes. Then we recall bilinear IFE functions and spaces defined on rectangular meshes from \cite{XHe_TLin_YLin_Bilinear_Approximation, TLin_YLin_RRogers_MRyan_Rectangle}. The construction of linear IFE functions on triangular meshes is similar, so we refer to \cite{ZLi_TLin_YLin_RRogers_linear_IFE, ZLi_TLin_XWu_Linear_IFE} for more details. Finally, we introduce the partially penalized immersed finite element methods for the parabolic interface problem.

\subsection{Weak Form on Continuous Level}
Let $\mathcal{T}_h$ be a Cartesian (either triangular or rectangular) mesh consisting of elements whose diameters are
not larger than $h$. We denote by $\mathcal
{N}_h$ and $\mathcal{E}_h$ the set of all vertices and edges in $\mathcal{T}_h$, respectively. The set of all interior edges are denoted by $\mathcal {\mathring{E}}_h$. If an element is cut by the interface $\Gamma$, we call it an interface
element; otherwise, it is called a non-interface element.
%Assume that the interface $\Gamma$ intersects an rectangle at two points $D$ and $E$, and the line segment $\overline{DE}$ divides $K$ into two parts $K^+$ and $K^-$ with $K=K^+\cup K^-\cup \overline{DE}$.
Let $\mathcal{T}_h^i$ be
the set of interface elements and
$\mathcal{T}_h^n$ be the set of non-interface elements. Similarly, we define the set of interface edges and the set of
non-interface edges which are denoted by $\mathcal{E}_h^i$ and $\mathcal{E}_h^n$, respectively.
Also, we use $\mathcal {\mathring{E}}_h^i$ and
$\mathcal{\mathring{E}}_h^n$ to denote the set of interior interface
edges and interior non-interface edges, respectively. With the assumption that $\Gamma \subset \Omega$ we have
$\mathcal {\mathring{E}}_h^i = \mathcal{E}_h^i$.

We assign a unit normal vector $\mathbf{n}_B$ to every edge $B\in {{\mathcal{E}}}_h$. If $B$ is an interior edge, we let $K_{B,1}$ and $K_{B,2}$ be the two elements that share the common edge $B$ and we assume that the normal vector $\mathbf{n}_B$ is oriented
from $K_{B,1}$ to $K_{B,2}$. For a function $u$ defined on $K_{B,1}\cup K_{B,2}$, we set its average and jump on $B$ as follows
\begin{equation*}
\aver{u}_B=\frac{1}{2}\left((u|_{K_{B,1}})|_B+(u|_{K_{B,2}})|_B\right),~~~~
\jump{u}_B=(u|_{K_{B,1}})|_B-(u|_{K_{B,2}})|_B.
\end{equation*}
 If $B$ is on the boundary $\partial\Omega$,
$\mathbf{n}_B$ is taken to be the unit outward vector normal to
$\partial\Omega$, and we let
\[
\aver{u}_B=\jump{u}_B=u|_{B}.
\]
For simplicity, we often drop the subscript $B$ from these notations if there is no danger to cause any confusions.

Without loss of generality, we assume that the interface $\Gamma$ intersects with the edge of each interface element $K \in \mathcal{T}_h^i$ at two points. We then partition
$K$ into two sub-elements $K^-$ and $K^+$ by the line segment connecting these two interface points, see the illustration given in Figure \ref{fig: interface element 4pc}.

To describe a weak form for the parabolic interface problem, we introduce the following space
\begin{equation}\label{eq: sobolev space}
  V_h = \{v\in L^2(\Omega): v \text{ satisfies conditions} \textbf{ (HV1) - (HV4)}\}
\end{equation}
\begin{description}
  \item[(HV1)] $v|_K\in H^1(K), v|_{K^s}\in H^2(K^s), s = \pm, \forall K\in \mathcal{T}_h$.
  \item[(HV2)] $v$ is continuous at every $X\in \mathcal{N}_h$.
  \item[(HV3)] $v$ is continuous across each $B\in {\mathring{\mathcal{E}}}_h^n$.
  \item[(HV4)] $v|_{\partial \Omega} = 0$.
\end{description}
%Furthermore, we let $V_{h,0}$ be the subspace of $V_h$ such that every $v \in V_{h,0}$ is such that
%\begin{description}
%%  \item[(HV1)] $v|_K\in H^1(K), v|_{K^s}\in H^2(K^s), s = \pm, \forall K\in \mathcal{T}_h$.
%%  \item[(HV2)] $v$ is continuous at every $X\in \mathcal{N}_h$.
%%  \item[(HV3)] $v$ is continuous across each $B\in {\mathring{\mathcal{E}}}_h^n$.
%  \item[(HV4)] $v|_{\partial \Omega} = 0$.
%\end{description}
\begin{figure}
  \centering
  % Requires \usepackage{graphicx}
  \includegraphics[width=.45\textwidth]{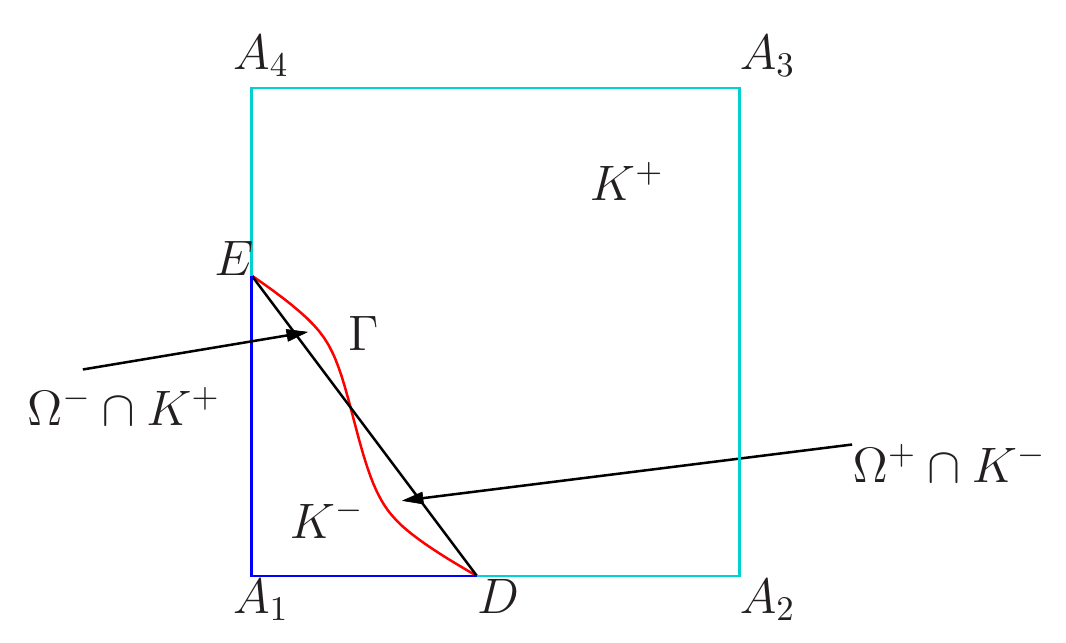}\\
  \caption{An interface element}
  \label{fig: interface element 4pc}
\end{figure}
Note that functions in $V_h$ are allowed to be discontinuous on interface edges.
We now derive a weak form with the space $V_h$ for the parabolic interface problem
\eqref{eq:parab_eq} - \eqref{eq:parab_eq_jump_2}. First, we assume that its exact solution $u$ is in
$\tilde H^{2}(\Omega)$. Then, we multiply equation \eqref{eq:parab_eq} by a test function $v\in V_h$ and integrate both sides on each element
$K\in\mathcal{T}_h$. If $K$ is a non-interface element, a direct application of
 Green's formula leads to
\begin{equation} \label{weak_local}
\int_Ku_t vdX+\int_{K}\beta\nabla
u\cdot\nabla vdX-\int_{\partial K}\beta\nabla
u\cdot{\mathbf{n}_K}vds=\int_Kf v dX.
\end{equation}
If $K$ is an interface element, we assume that the interface $\Gamma$ intersects $\partial K$ at points $D$ and $E$. Then, without loss of generality, we assume that
$\Gamma$ and the line $\overline{DE}$ divide $K$ into up to four sub-elements, see the illustration in Figure \ref{fig: interface element 4pc} for a rectangle interface element, such that
\[
K=(\Omega^+\cap K^+)\cup(\Omega^-\cap K^-)\cup(\Omega^+\cap K^-)\cup(\Omega^-\cap K^+).
\]
Now, applying Green's formula separately on these four sub-elements, we get
\begin{eqnarray}
&& -\int_{K}\nabla\cdot(\beta\nabla u)vdX \nonumber\\
&=& -\int_{\Omega^+\cap K^+}\nabla\cdot(\beta^+\nabla
u)vdX-\int_{\Omega^-\cap K^-}\nabla\cdot(\beta^-\nabla u)vdX \nonumber\\
&& -\int_{\Omega^+\cap K^-}\nabla\cdot(\beta^+\nabla
u)vdX-\int_{\Omega^-\cap K^+}\nabla\cdot(\beta^-\nabla u)vdX \nonumber\\
&=&\int_{\Omega^+\cap K^+}\beta^+\nabla u\cdot\nabla vdX-\int_{\partial
(\Omega^+\cap K^+)}\beta^+\nabla u\cdot{\mathbf{n}}vds+\int_{\Omega^-\cap K^-}\beta^-\nabla u\cdot\nabla vdX-\int_{\partial
(\Omega^-\cap K^-)}\beta^-\nabla u\cdot{\mathbf{n}}vds \nonumber\\
&& +\int_{\Omega^+\cap K^-}\beta^+\nabla
u\cdot\nabla vdX -\int_{\partial(\Omega^+\cap K^-)}\beta^+\nabla u\cdot{\mathbf{n}}vds +\int_{\Omega^-\cap K^+}\beta^-\nabla
u\cdot\nabla vdX-\int_{\partial(\Omega^-\cap K^+)}\beta^-\nabla u\cdot{\mathbf{n}}vds \nonumber\\
&=&\int_{K}\beta\nabla u\cdot\nabla vdX-\int_{\partial K}\beta\nabla
u\cdot{\mathbf{n}}vds - \int_{K\cap\Gamma}\jump{\beta\nabla
u\cdot{\mathbf{n}}}_{K\cap\Gamma}vds \nonumber\\
&=&\int_{K}\beta\nabla u\cdot\nabla vdX-\int_{\partial K}\beta\nabla
u\cdot{\mathbf{n}}vds. \label{eq: interface Green formual}
\end{eqnarray}
The last equality is due to the interface jump condition \eqref{eq:parab_eq_jump_2}. The derivation of \eqref{eq: interface Green formual} implies that \eqref{weak_local} also holds on interface elements.
%=\int_KvfdX.
\begin{remark}
Figure \ref{fig: interface element 4pc} is a typical configuration of an interface element. If the interface is smooth enough and the mesh size is sufficiently small, an interface is usually divided into three sub-elements, i.e., one of the two terms $\Omega^+\cap K^-$ and $\Omega^-\cap K^+$ is an empty set. In this case, the related discussion is similar but slightly simpler.
\end{remark}
Summarizing \eqref{weak_local} over all elements indicates
 \begin{equation}
\int_\Omega u_tvdX+\sum\limits_{K\in\mathcal{T}_h}\int_K\beta\nabla u\cdot \nabla
vdX-\sum\limits_{B\in \mathring{\mathcal{E}}_h^i}\int_{B}\aver{\beta\nabla u\cdot
\mathbf{n}_B}\jump{v}ds=\int_{\Omega}fvdX.
\end{equation}
Let $H_h=\tilde H^2(\Omega)+V_h$ on which we introduce a bilinear form $a_\epsilon$:
$H_h\times H_h\rightarrow \mathbb{R}$:
\begin{eqnarray}\nonumber
a_\epsilon(w,v)&=&\sum\limits_{K\in\mathcal{T}_h}\int_K\beta\nabla
v\cdot \nabla w dX-\sum\limits_{B\in \mathring{\mathcal{E}}_h^i}\int_{B}\aver{\beta\nabla w\cdot
\mathbf{n}_B}\jump{v}ds\\
&&+\epsilon\sum\limits_{B\in \mathring{\mathcal{E}}_h^i}\int_{B}\aver{\beta\nabla v\cdot
\mathbf{n}_B}\jump{w}ds+\sum\limits_{B\in
\mathring{\mathcal{E}}_h^i}\int_{B}\frac{\sigma^0_B}{|B|^{\alpha}}\jump{v}\jump{w}ds,\label{eq: bilinear form}
\end{eqnarray}
where $\alpha>0$, $\sigma^0_B\geq 0$ and $|B|$ means the length of
$B$. Note that the regularity of $u$ leads to
\[
\epsilon\sum\limits_{B\in \mathcal{\mathring E}_h^i}\int_{B}\aver{\beta\nabla v\cdot
\mathbf{n}_B}\jump{u}ds=0,
\ \ \sum\limits_{B\in
\mathcal{\mathring E}_h^i}\int_{B}\frac{\sigma^0_B}{|B|^{\alpha}}\jump{v}\jump{u}ds=0.
\]
We also define the following linear form
\[
L(v)=\int_{\Omega}fvdX.
%+\sum\limits_{B\in \partial\Omega}\int_{B}g\left(\epsilon(\beta\nabla v\cdot \mathbf{n}_B)+\frac{\sigma^0_B}{|B|^{\alpha}}v\right)ds.
\]
Finally, we have the following weak form of the parabolic interface problem
\eqref{eq:parab_eq}-\eqref{eq:parab_eq_jump_2}: find $u: [0,T] \rightarrow \tilde H^{2}(\Omega)$ that satisfies
\eqref{eq:parab_eq_jump_1}, \eqref{eq:parab_eq_jump_2}, and
%Find $u: [0,T]\rightarrow \tilde H^2(\Omega)$, such that
\begin{eqnarray}
\left(u_t,
v\right)+a_\epsilon(u,v)&=&L(v), \label{eq:weak_form}\
\ \forall v\in V_h,\\
u(X,0)&=&u_0(X), \ \ \forall X \in\Omega. \label{eq:weak_init_cond}
\end{eqnarray}

\subsection{Immersed Finite Element Functions}
In this subsection, to be self-contained, we recall IFE spaces that approximate $V_h$. We describe the bilinear IFE space with a little more details, and refer readers to
\cite{ZLi_TLin_YLin_RRogers_linear_IFE, ZLi_TLin_XWu_Linear_IFE} for corresponding descriptions of the linear IFE space on a triangular Cartesian mesh. Since an IFE space
uses standard finite element functions on each non-interface element, we will focus on the presentation of IFE functions on interface elements.

The bilinear ($Q_1$) immersed finite element functions were introduced in \cite{XHe_TLin_YLin_Bilinear_Approximation, TLin_YLin_RRogers_MRyan_Rectangle}. On each interface element, a local IFE space uses IFE functions in the form of piecewise bilinear polynomials constructed according to interface jump conditions.
Specifically, we partition each interface element $K = \square A_1A_2A_3A_4$ into two sub-elements $K^-$ and $K^+$ by the line connecting points
$D$ and $E$ where the interface $\Gamma$ intersects with $\partial K$, see Figure \ref{fig: interface elements} for illustrations. Then we construct
four bilinear IFE shape functions $\phi_i, i = 1, 2, 3, 4$ associated with the vertices of $K$ such that

%spaces to fit interface conditions. Standard $Q_1$ finite element functions are used on non-interface elements. These functions are locally modified on each interface element $K\in\mathcal{T}_h^i$. An element $K$ is called a Type I (or Type II) interface element if the points $D$ and $E$ are located on two adjacent (or opposite) edges of $K$. See Figure \ref{fig: interface elements} for an illustration.

\begin{figure}[ht]
  \centering
  % Requires \usepackage{graphicx}
  \includegraphics[width=0.25\textwidth]{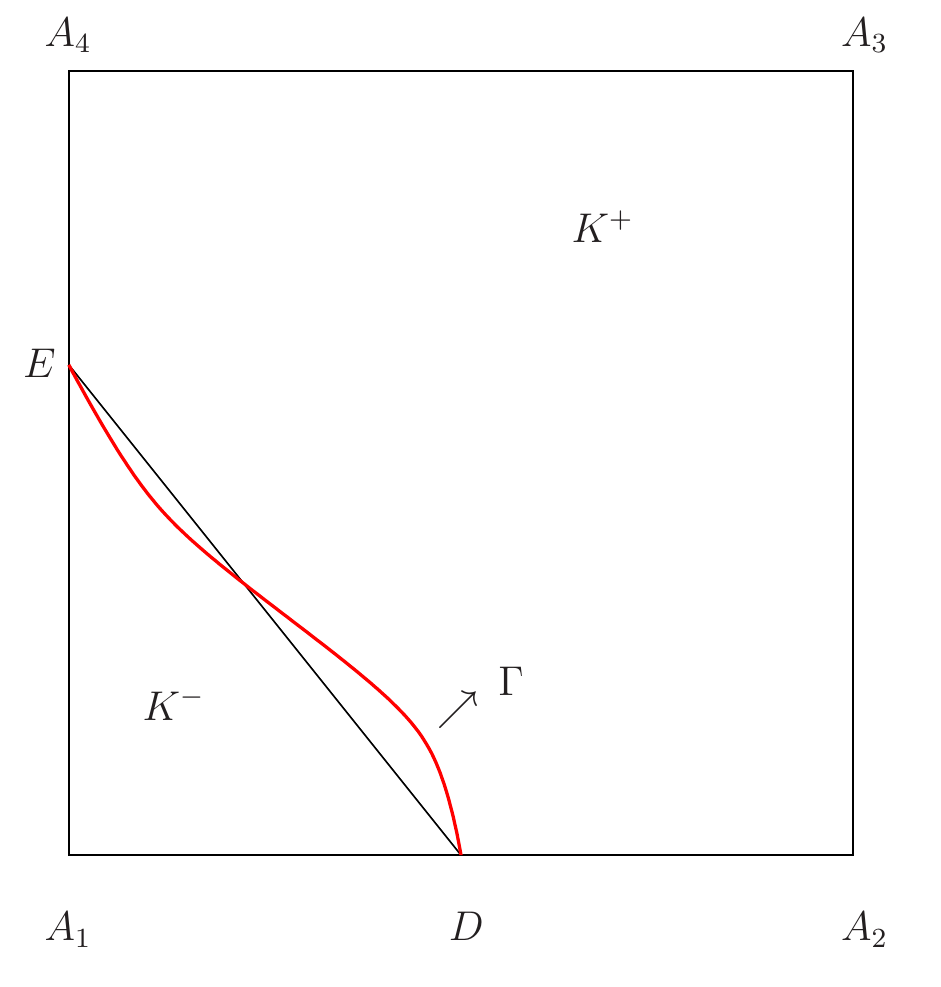}~~~~
  \includegraphics[width=0.25\textwidth]{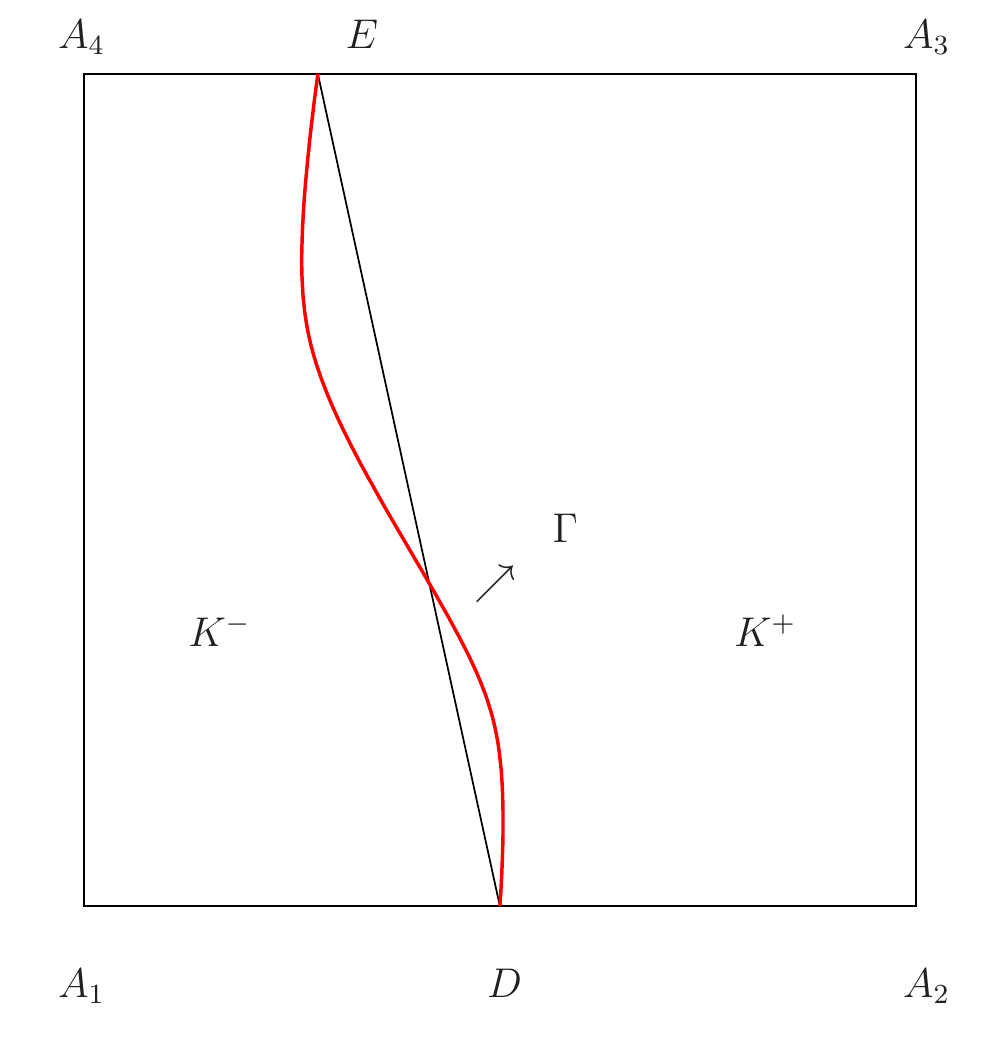}\\
  \caption{Type I and Type II interface rectangles}
  \label{fig: interface elements}
\end{figure}
% On an interface element $K\in\mathcal{T}_h^i$, the shape function $\phi$ is defined by the following piecewise bilinear polynomial
\begin{equation}\label{eq: bilinear IFE function}
    \phi_{i}(x,y) =
    \left\{
      \begin{array}{cc}
        \phi_{i}^{+}(x,y) = a_i^+ + b_i^+x + c_i^+y + d_i^+xy,~~~~  &\text{if}~ (x,y)\in K^+, \vspace{1mm}\\
        \phi_{i}^{-}(x,y) = a_i^- + b_i^-x + c_i^-y + d_i^-xy,~~~~  &\text{if}~ (x,y)\in K^-, \vspace{1mm}\\
      \end{array}
    \right.
\end{equation}
%Four nodal basis functions $\phi_{i,K}$, $i=1,2,3,4$ are constructed in the form of \eqref{eq: bilinear IFE function} and satisfy the following conditions:
according to the following constraints:
\begin{itemize}
  \item nodal value condition:
  \begin{equation}\label{eq: nodal condition}
    \phi_{i}(A_j) = \delta_{ij},~~~~~i,j = 1,2,3,4.
\end{equation}
  \item continuity on $\overline{DE}$
  \begin{equation}\label{eq: continuity1}
    \jump{\phi_{i}{(D)}} =0, ~~~~\jump{\phi_{i}{(E)}} = 0,~~~~\bigjump{\frac{\partial^2 \phi_{i}}{\partial x\partial y}} = 0.
\end{equation}
  \item continuity of normal component of flux
  \begin{equation}\label{eq: continuity2}
    \int_{\overline{DE}}\bigjump{\beta\frac{\partial \phi_{i}}{\partial n}} \mbox{d}s= 0.
  \end{equation}
\end{itemize}
It has been shown \cite{XHe_Thesis_Bilinear_IFE,XHe_TLin_YLin_Bilinear_Approximation} that conditions specified in \eqref{eq: nodal condition} - \eqref{eq: continuity2} can uniquely determine these shape functions. Figure \ref{fig: IFE 2D Bilinear Local_Basis} provides a comparison of FE and IFE shape functions.
%We can observe that a local IFE basis function is continuous within the associated interface element.
\begin{figure}[ht]
  \centering
  % Requires \usepackage{graphicx}
  \includegraphics[width=0.3\textwidth]{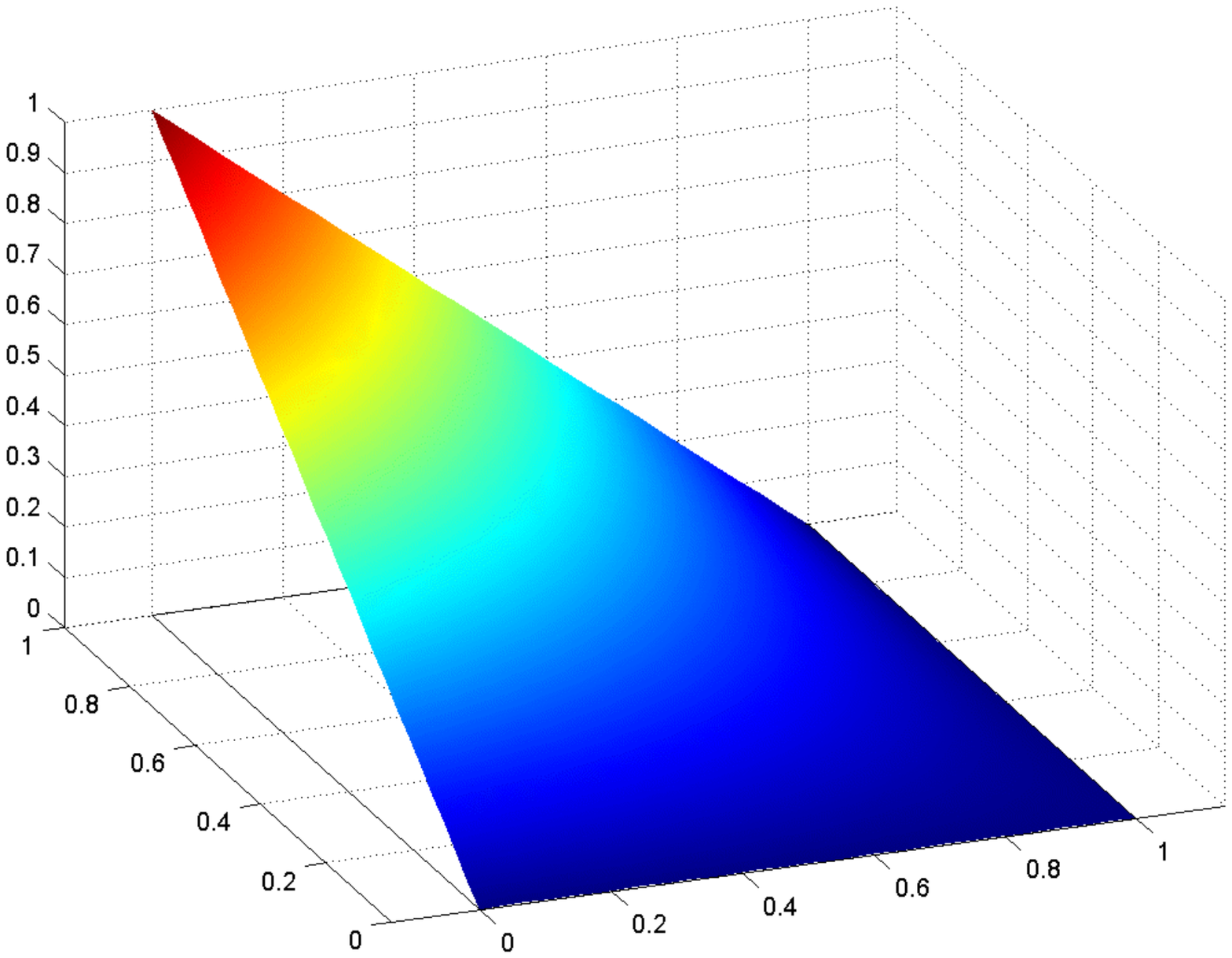}~~
  \includegraphics[width=0.3\textwidth]{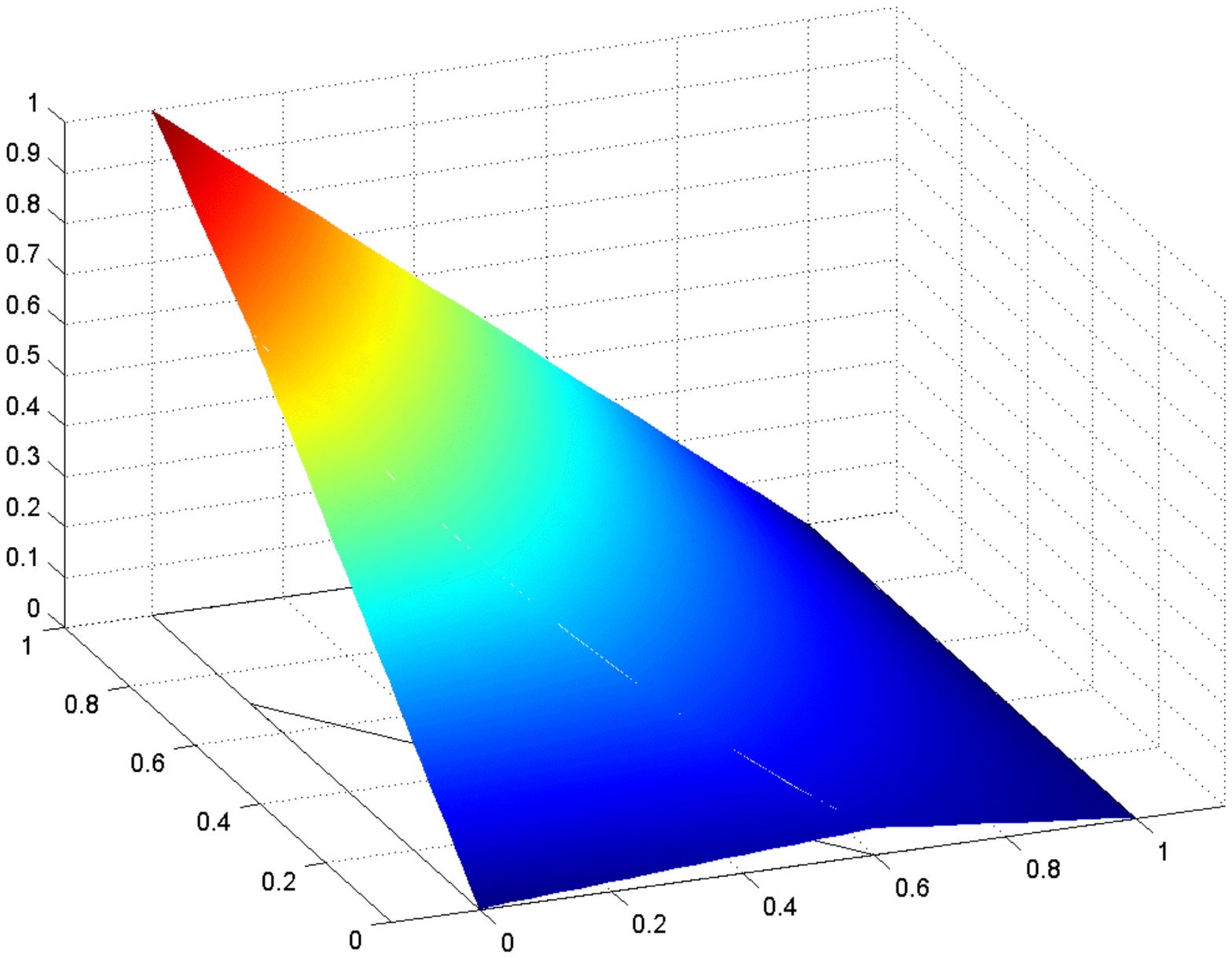}~~
  \includegraphics[width=0.3\textwidth]{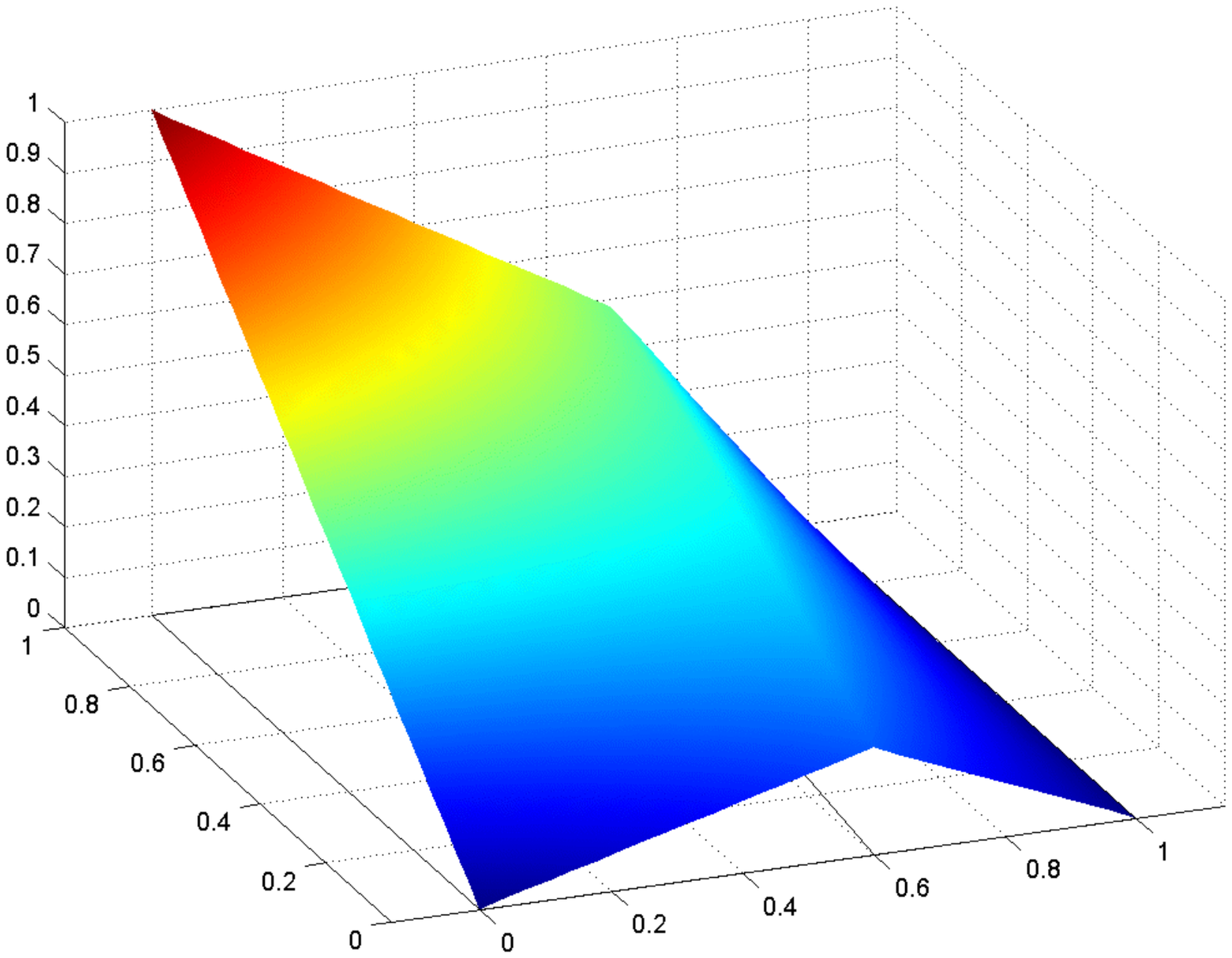}\\
  \caption{Bilinear FE/IFE local basis functions. From left: FE, IFE (Type I), IFE (Type II)}
  \label{fig: IFE 2D Bilinear Local_Basis}
\end{figure}

Similarly, see more details in \cite{ZLi_TLin_YLin_RRogers_linear_IFE, ZLi_TLin_XWu_Linear_IFE}, on a triangular interface element $K = \triangle A_1A_2A_3$, we can construct three linear IFE shape functions $\phi_{i}, i = 1, 2, 3$ that
satisfy the first two equations in \eqref{eq: continuity1}, \eqref{eq: continuity2}, and
\begin{eqnarray*}
\phi_{i}(A_j) = \delta_{ij},~~~~~i,j = 1,2,3.
\end{eqnarray*}

These IFE shape functions possess a few notable properties such as their consistence with the corresponding standard Lagrange type FE shape functions and their formation of partition of unity. We refer readers to \cite{XHe_Thesis_Bilinear_IFE,XHe_TLin_YLin_Bilinear_Approximation, ZLi_TLin_YLin_RRogers_linear_IFE,XZhang_PHDThesis} for more details. \\

Then, on each element $K \in \mathcal{T}_h$, we define the local IFE space as follows:
\begin{eqnarray*}
S_h(K) = span\{\phi_i,~~1 \leq i \leq d_K\},~~d_K = \begin{cases}
3, & \text{if $K$ is a triangular element}, \\
4, &\text{if $K$ is a rectangular element},
\end{cases}
\end{eqnarray*}
where $\phi_i, 1 \leq i \leq d_K$ are the standard linear or bilinear
Lagrange type FE shape functions for $K \in \mathcal{T}_h^n$; otherwise, they are the IFE shape functions described above.
Finally, the IFE spaces on the whole solution domain $\Omega$ are defined as follows:
\begin{eqnarray*}
S_h(\Omega) &=& \{v \in V_h~:~v|_K \in S_h(K),~\forall K \in \mathcal{T}_h\}.
%S_{h,0}(\Omega) &=& S_{h}(\Omega) \cap V_{h,0}.
\end{eqnarray*}

%The procedure to construct a global IFE basis function is the similar to construction of a standard global FE basis.  Associated with each vertex $z\in \mathcal{N}_h$, we define a global IFE basis function $\phi_z$ by glue local IFE basis functions that share $z$ as a common vertex. An illustration of a global FE basis and a global IFE basis functions are plotted in Figure \ref{fig: IFE 2D Bilinear Global_Basis}. The global IFE space is defined by
%\begin{equation*}
%  S_h(\Omega)=span\{\phi_z: z\in  \mathcal{N}_h \}.
%\end{equation*}
\begin{figure}[ht]
  \centering
  % Requires \usepackage{graphicx}
  \includegraphics[width=0.4\textwidth]{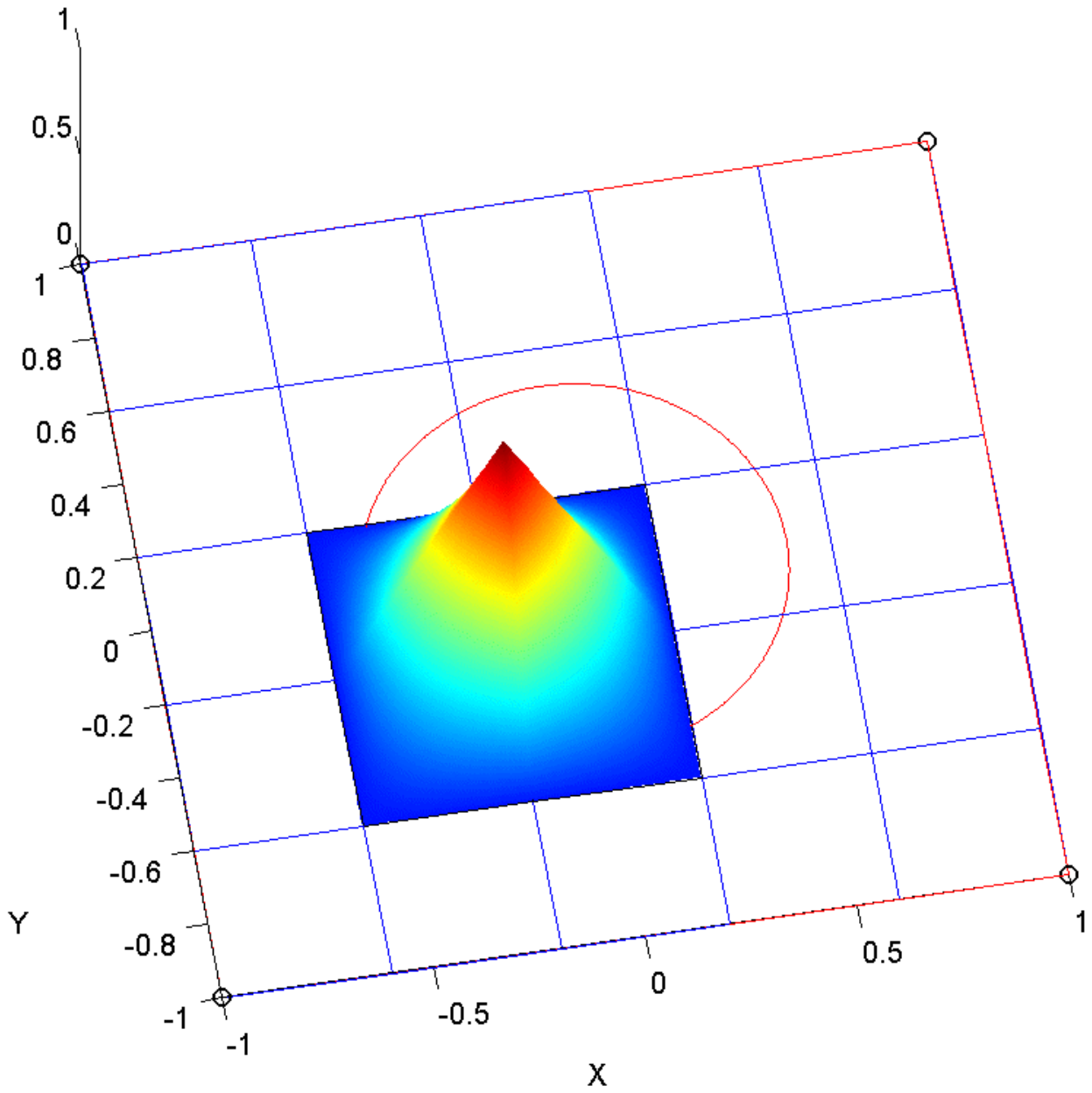}~~~
  \includegraphics[width=0.4\textwidth]{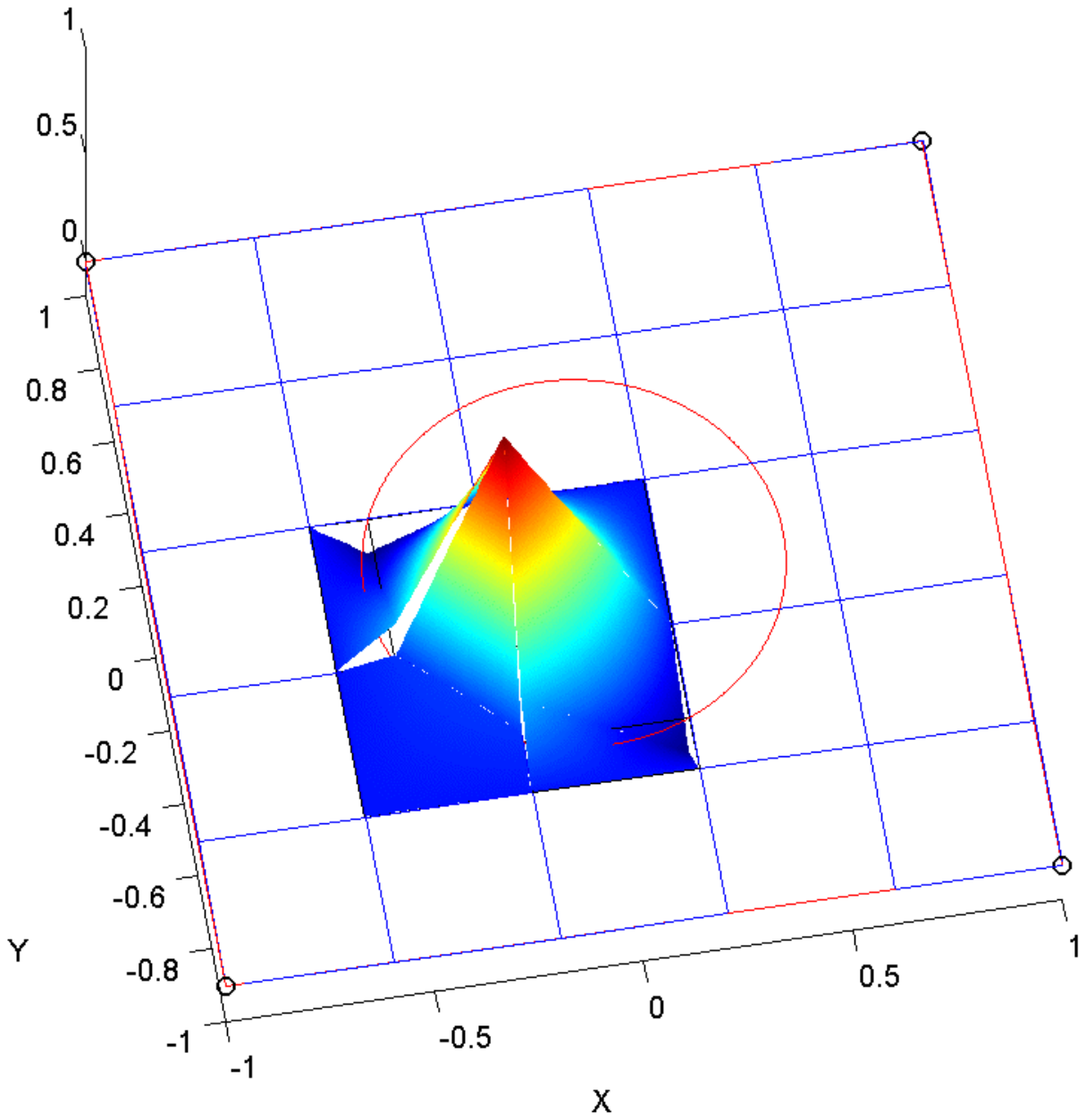}\\
  \caption{Bilinear FE (left) and IFE (right) global basis functions}
  \label{fig: IFE 2D Bilinear Global_Basis}
\end{figure}

\begin{remark}
We note that an IFE function may not be continuous across the element boundary that intersects with the interface. An IFE shape function is usually not zero on an interface edge, see the values on the edge between the points $(0,0)$ and $(0,1)$ for the two IFE shape functions plotted in Figure \ref{fig: IFE 2D Bilinear Local_Basis}. On this interface edge, the shape functions vanish at two endpoints, but not on the entire edge. The maximum of the absolute values of the shape on that edge is determined by the geometrical and material configuration on an interface element. When the local IFE shape functions are put together to form a Lagrange type global IFE basis function associated with a node in a mesh, it is inevitably to be discontinuous on interface edges in elements around that node, as illustrated by the cracks in a global IFE basis function plotted in Figure \ref{fig: IFE 2D Bilinear Global_Basis}. As observed in \cite{TLin_YLin_XZhang_PPIFE_Elliptic, XZhang_PHDThesis}, this discontinuity on interface edges might be a factor causing the deterioration of the convergence of classic IFE solution around the interface, and this motivates us to add partial penalty on interface edges for alleviating this adversary.
\end{remark}

%\begin{remark}
%The discontinuity of IFE functions on interface edges leads to in accurate approximation of IFE solution around the interface in classic Galerkin formulation \cite{TLin_YLin_XZhang_PPIFE_Elliptic, XZhang_PHDThesis}. This is the motivation that we add partial penalty on interface edges.
%\end{remark}

\subsection{Partially Penalized Immersed Finite Element Methods}
In this subsection we use the global IFE space $S_h(\Omega)$ to discretize the weak form \eqref{eq:weak_form} and \eqref{eq:weak_init_cond} for the parabolic interface problem. While the standard
semi-discrete or many fully discrete frameworks can be applied, we will focus on the following prototypical schemes because of their popularity.  \\

%Then we let
%\[
%K^*=(\Omega^+\cap K^-)\cup (\Omega^-\cap K^+),
%\]
%that is, $K^*$ is bounded by $\Gamma\cap K$ and $\overline{DE}$, see the plot in
%Figure \ref{fig: interface element 4pc}.
%Figure \ref{fig: domain}.

\noindent
{\bf A semi-discrete PPIFE method}: Find $u_h: [0,T]\rightarrow
S_{h}(\Omega)$ such that %$u_h(X, t) = g(X, t),~\forall X \in \mathcal{N}_h\cap \Omega$ and
\begin{eqnarray}
\left( u_{h,t},
v_h\right)+a_\epsilon(u_h,v_h)&=&L(v_h),\ \ \forall v_h\in
S_{h}(\Omega), \label{eq:DG-IFE_semi}\\
u_h(X,0)&=&\tilde u_h(X),\ \ \forall X\in\Omega, \label{eq:DG-IFE_semi_ic}
\end{eqnarray}
where $\tilde u_h$ is an approximation of $u_0$ in the space
$S_{h}(\Omega)$. According to the analysis to be carried out in the next section, $\tilde u_h$ can be chosen as the interpolation of $u_0$ or the elliptic projection of
$u_0$ in the IFE space $S_h(\Omega)$. \\

\noindent
{\bf A fully discrete PPIFE method}: For a positive integer $N_t$, we let $\Delta t=T/N_t$ which is the time step and
let $t^n=n\Delta t$ for integer $n \geq 0$. Also, for a sequence $\varphi^n, n \geq 1$, we let
\[
\partial_t \varphi^n=\frac{\varphi^{n}-\varphi^{n-1}}{\Delta t}.
\]
Then, the fully discrete PPIFE method is to find a sequence $\big\{u_h^n\big\}_{n=1}^{N_t}$ of
functions in $S_{h}(\Omega)$ such that % $u_h^n = g(X, t^n),~\forall X \in \mathcal{N}_h\cap \Omega$ and
\begin{eqnarray}
\left(\partial_t u_h^n,
v_h\right)+a_\epsilon(\theta
u_h^{n}+(1-\theta)u_h^{n-1},v_h)&=&\theta L^n(v_h)+(1-\theta)L^{n-1}(v_h),~~ \forall v_h\in
S_{h}(\Omega), \label{eq:DG-IFE_full_disc}\\
u_h^0(X)&=&\tilde u_h(X),\ \ \forall X\in\Omega. \label{eq:DG-IFE_full_disc_ic}
\end{eqnarray}
Here, $L^n(v_h) = \int_\Omega f(X, t^n) v_h(X) dX, n \geq 0$ and $\theta$ is a parameter chosen from $[0,1]$.
Popular choices for $\theta$ are $\theta = 0, \theta = 1$ and $\theta = 1/2$ representing the forward Euler method, the backward Euler method, and
the Crank-Nicolson method, respectively.

\begin{remark}
The bilinear form $a_\epsilon(\cdot, \cdot)$ in \eqref{eq:weak_form} is almost the same as that used in the interior penalty DG finite element
methods for the standard elliptic boundary value problem \cite{ZChen_FEM, Hesthaven_Warburton_Nodal_DG, Riviere_DG_book} except that it contains integrals over
interface edges instead of all the edges. This is why we call IFE methods based on this bilinear form partially penalized IFE (PPIFE) methods. As suggested by DG finite element methods, the parameter $\epsilon$ in this bilinear form is usually chosen as $-1$, $0$, or $1$. Note that $a_\epsilon(\cdot,\cdot)$ is symmetric if $\epsilon=-1$ and is nonsymmetric otherwise.
\end{remark}

\section{Error Estimations for PPIFE Methods}
\setcounter{section}{3}\setcounter{equation}{0}
The goal of this section is to derive the {\it a priori} error estimates for the PPIFE methods developed in the previous section. As usual, without loss of generality for error estimation, we assume that $g(X, t) = 0$ in the boundary condition \eqref{eq:parab_eq_bc} and assume $\Gamma \cap \partial \Omega = \emptyset$.
The error bounds will be given in an energy norm
that is equivalent to the standard semi-$H^1$ norm. These error bounds show that these PPIFE methods converge optimally with respect to the polynomials employed.

\subsection{Some Preliminary Estimates}
%\SEC{3 \hspace{2mm}Some lemmas}

\noindent

First, for every $v\in V_h$, we define its energy norm as follows:
\[
\|v\|_h=\left(\sum\limits_{K\in\mathcal{T}_h}\int_K\beta\nabla
v\cdot \nabla vdX+\sum\limits_{B\in \mathring{\mathcal{
E}}_h^i}\int_{B}\frac{\sigma_B^0}{|B|^{\alpha}}\jump{v}\jump{v}ds\right)^{1/2}.
\]
For an element $K\in
\mathcal{T}_h$, let $|K|$ denote the area of $K$. It is well known that the following trace
inequalities \cite{Riviere_DG_book} hold:

\begin{lemma} There exists a constant $C$ independent of $h$ such that for every $K\in \mathcal{T}_h$,
\begin{eqnarray}
&&\|v\|_{L^2(B)}\leq C|B|^{1/2}|K|^{-1/2}(\|v\|_{L^2(K)}+h\|\nabla
v\|_{L^2(K)}),\ \forall v\in H^1(K), ~B\subset\partial K, \label{eq:trace_1}\\
&&\|\nabla v\|_{L^2(B)}\leq C|B|^{1/2}|K|^{-1/2}(\|\nabla
v\|_{L^2(K)}+h\|\nabla^2 v\|_{L^2(K)}),\ \forall v\in H^2(K), ~B\subset\partial K. \label{eq:trace_2}
\end{eqnarray}
\end{lemma}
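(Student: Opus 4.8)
The plan is to prove both inequalities by the standard scaling argument from a fixed reference element, exploiting that on a Cartesian mesh every $K$ is the image of a reference triangle or the reference unit square $\hat K$ under an axis-aligned affine map $F_K(\hat X) = B_K\hat X + b_K$, where $B_K$ is a (diagonal) matrix with $\|B_K\|\le Ch$ and $|\det B_K| = |K|/|\hat K|$. First I would record the estimate on $\hat K$: by the classical trace theorem there is a constant $\hat C$, depending only on $\hat K$, with
\[
\|\hat v\|_{L^2(\hat B)} \le \hat C\big(\|\hat v\|_{L^2(\hat K)} + \|\hat\nabla\hat v\|_{L^2(\hat K)}\big), \qquad \forall\,\hat v\in H^1(\hat K),
\]
uniformly over the finitely many edges $\hat B\subset\partial\hat K$.

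Next I would pull this back to a general interface or non-interface element $K$. Writing $v = \hat v\circ F_K^{-1}$ and using the change-of-variables formulas $\|v\|_{L^2(K)}^2 = |\det B_K|\,\|\hat v\|_{L^2(\hat K)}^2$, the one-dimensional analogue $\|v\|_{L^2(B)}^2 = (|B|/|\hat B|)\,\|\hat v\|_{L^2(\hat B)}^2$, and the chain rule $\hat\nabla\hat v = B_K^{T}\,(\nabla v)\circ F_K$, which yields $\|\hat\nabla\hat v\|_{L^2(\hat K)}^2 \le \|B_K\|^2|\det B_K|^{-1}\|\nabla v\|_{L^2(K)}^2$, I substitute into the reference estimate and multiply through by $(|B|/|\hat B|)^{1/2}$. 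Since $|\det B_K|^{-1/2} = |\hat K|^{1/2}|K|^{-1/2}$ and $\|B_K\|\le Ch$, after collecting the fixed constants $\hat C$, $|\hat K|$, $|\hat B|$ and the mesh-regularity constant into a single $C$ independent of $h$ and of $K$, this gives exactly \eqref{eq:trace_1}.

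For \eqref{eq:trace_2} I would apply \eqref{eq:trace_1} componentwise to $\nabla v$: if $v\in H^2(K)$ then $\partial_x v,\,\partial_y v\in H^1(K)$, so
\[
\|\partial_x v\|_{L^2(B)} \le C\,|B|^{1/2}|K|^{-1/2}\big(\|\partial_x v\|_{L^2(K)} + h\|\nabla\partial_x v\|_{L^2(K)}\big),
\]
and similarly for $\partial_y v$; squaring, adding, and using $\|\nabla\partial_x v\|_{L^2(K)}^2 + \|\nabla\partial_y v\|_{L^2(K)}^2 \le \|\nabla^2 v\|_{L^2(K)}^2$ delivers the second inequality.

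The one place that requires care — rather than genuine difficulty — is the uniformity of $C$: I must check that $\|B_K\|$, $|\det B_K|^{-1}$, and the ratio $|B|/|\hat B|$ are controlled purely in terms of $h$ and the shape-regularity of the Cartesian family, with no hidden dependence on the interface $\Gamma$ or on how $\Gamma$ cuts $K$. This is immediate here because the trace inequalities concern a single element $K$ with $v$ only assumed to be in $H^1$ (resp.\ $H^2$) on $K$ as a whole — the interface plays no role — and because on a rectangular element, although the two pairs of edges may have unequal lengths, each edge length is comparable both to $h$ and to $|K|^{1/2}$, so the stated form with the factor $|B|^{1/2}|K|^{-1/2}$ holds uniformly in every case.
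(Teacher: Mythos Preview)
Your proof is correct and is the standard scaling argument for trace inequalities on shape-regular meshes. The paper itself does not prove this lemma; it simply states the inequalities as ``well known'' and cites Rivi\`ere's DG book \cite{Riviere_DG_book}, so there is no in-paper proof to compare against. Your argument is precisely the one such a reference would give: trace on the reference element, then transport via the affine map, tracking the Jacobian factors to produce $|B|^{1/2}|K|^{-1/2}$ and $h$. Applying the first inequality componentwise to $\partial_x v$ and $\partial_y v$ to obtain the second is also the standard route. One cosmetic point: $\|\nabla\partial_x v\|_{L^2(K)}^2 + \|\nabla\partial_y v\|_{L^2(K)}^2$ is in fact equal to $\|\nabla^2 v\|_{L^2(K)}^2$ under the usual Frobenius convention for the Hessian, not merely bounded by it, but your inequality is of course valid.
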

Since the local IFE space $S_h(K)\subset H^1(K)$ for all $K\in
\mathcal{T}_h$ (e.g. \cite{XHe_Thesis_Bilinear_IFE,XHe_TLin_YLin_Bilinear_Approximation,ZLi_TLin_YLin_RRogers_linear_IFE}), the trace inequality \eqref{eq:trace_1} is valid for all $v\in S_h(K)$. However, for $K\in
\mathcal{T}_h^i$, a function $v\in S_h(K)$  does not belong to $H^2(K)$ in general. So the second
trace inequality \eqref{eq:trace_2} cannot be directly applied to IFE functions. Nevertheless, for linear
and bilinear IFE functions, the corresponding  trace inequalities
have been established in \cite{TLin_YLin_XZhang_PPIFE_Elliptic}. The related results are summarized in the following lemma.

\begin{lemma}
There exists a constant $C$ independent of interface location and $h$ but depending on the ratio of coefficients $\beta^+$ and $\beta^-$ such that for every
linear or bilinear IFE function $v$ on $K\in \mathcal{T}_h^i$,
\begin{eqnarray}
&&\|\beta v_d\|_{L^2(B)}\leq Ch^{1/2}|K|^{-1/2}\|\sqrt{\beta}\nabla
v\|_{L^2(K)},\ \forall v\in S_h(K), B\subset\partial K, d=x \
\text{or}\ y, \label{eq:trace_ife_1}\\
&&\|\beta \nabla v\cdot \mathbf{n}_B\|_{L^2(B)}\leq
Ch^{1/2}|K|^{-1/2}\|\sqrt{\beta}\nabla v\|_{L^2(K)},\ \ \forall v\in
 S_h(K), B\subset\partial K. \label{eq:trace_ife_2}
\end{eqnarray}
\end{lemma}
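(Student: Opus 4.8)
The plan is to prove both estimates by an affine change of variables to a reference interface element, on which the assertion becomes a scale-invariant inequality that has to hold uniformly in the position of $\Gamma$; since the $H^2$-trace inequality \eqref{eq:trace_2} is not available for $v\in S_h(K)$, this reference estimate is where the real work lies. Fix $K\in\mathcal{T}_h^i$, map it onto the reference element $\hat K$ (the unit square in the $Q_1$ case, the unit right triangle in the linear case) by $X=X_K+h\hat X$, $v(X)=\hat v(\hat X)$, and observe $\hat\nabla\hat v=h\nabla_X v$, $d\hat X=h^{-2}dX$, $d\hat s=h^{-1}ds$, so that $\|\beta\hat v_{\hat d}\|_{L^2(\hat B)}^2=h\,\|\beta v_d\|_{L^2(B)}^2$ and $\|\sqrt\beta\hat\nabla\hat v\|_{L^2(\hat K)}^2=\|\sqrt\beta\nabla v\|_{L^2(K)}^2$ (an anisotropic Cartesian element only costs the fixed aspect-ratio constant). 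Since $|K|=|\hat K|\,h^2$, proving \eqref{eq:trace_ife_1} is equivalent to establishing, on $\hat K$ with hats suppressed,
\[
\|\beta v_{d}\|_{L^2(B)}\le C\,\|\sqrt\beta\nabla v\|_{L^2(K)}\qquad\text{for all }v\in S_h(K),\ B\subset\partial K,\ d=x,y,
\]
with $C$ depending only on $\beta^+/\beta^-$; and \eqref{eq:trace_ife_2} then follows at once from $\|\beta\nabla v\cdot\mathbf{n}_B\|_{L^2(B)}\le\|\beta v_x\|_{L^2(B)}+\|\beta v_y\|_{L^2(B)}$. On $\hat K$ set $\beta_{\min}=\min\{\beta^+,\beta^-\}$, $\beta_{\max}=\max\{\beta^+,\beta^-\}$; each edge $B\subset\partial\hat K$ lies in one of the two sub-elements $K^{+},K^{-}$ into which $\overline{DE}$ divides $\hat K$, or is split by $D$ or $E$ into a $K^{+}$-part and a $K^{-}$-part, on each of which $v$ coincides with the corresponding (bi)linear polynomial $v^{\pm}$ and $\beta\le\beta_{\max}$.

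In the linear case $\nabla v^{\pm}$ are constant vectors. Let $\mathbf t,\mathbf n$ be the unit tangent and normal of $\overline{DE}$. Since $v^{+}$ and $v^{-}$ are linear and agree at $D$ and $E$, they agree on all of $\overline{DE}$, so $\nabla v^{+}\cdot\mathbf t=\nabla v^{-}\cdot\mathbf t$; the flux condition \eqref{eq:parab_eq_jump_2}, imposed exactly here through \eqref{eq: continuity2}, gives $\beta^{+}\nabla v^{+}\cdot\mathbf n=\beta^{-}\nabla v^{-}\cdot\mathbf n$. Decomposing each gradient in the orthonormal frame $\{\mathbf t,\mathbf n\}$ then yields $|\nabla v^{+}|\le C_\beta|\nabla v^{-}|$ and $|\nabla v^{-}|\le C_\beta|\nabla v^{+}|$ with $C_\beta=\max\{1,\beta^{+}/\beta^{-},\beta^{-}/\beta^{+}\}$. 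Choosing the sub-element $K^{+}$ with $|K^{+}|\ge|K|/2$, we get $\|\sqrt\beta\nabla v\|_{L^2(K)}^2\ge\beta^{+}|\nabla v^{+}|^2|K^{+}|\ge c\,\beta_{\min}|\nabla v^{+}|^2$ (with $c$ depending only on $\hat K$), while on $B$, using $|v_d^{\pm}|\le|\nabla v^{\pm}|$, $|B|=O(1)$ and the comparability just established, $\|\beta v_d\|_{L^2(B)}^2\le\beta_{\max}^2\big(|\nabla v^{+}|^2+|\nabla v^{-}|^2\big)|B|\le C|\nabla v^{+}|^2$; dividing proves the reference bound.

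The bilinear case is the main obstacle, and it requires exploiting the constraints \eqref{eq: continuity1}--\eqref{eq: continuity2} more fully. Write $v^{\pm}=a^{\pm}+b^{\pm}x+c^{\pm}y+d^{\pm}xy$. The condition $\bigjump{\frac{\partial^2 v}{\partial x\partial y}}=0$ forces $d^{+}=d^{-}$, whence $v^{+}-v^{-}$ is \emph{affine}; vanishing at $D$ and $E$, it vanishes on all of $\overline{DE}$, so the constant vector $\nabla v^{+}-\nabla v^{-}=(b^{+}-b^{-},\,c^{+}-c^{-})$ is orthogonal to $\mathbf t$, i.e.\ $\nabla v^{+}-\nabla v^{-}=\lambda\mathbf n$ for a constant $\lambda$. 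Inserting this into the averaged flux condition \eqref{eq: continuity2} and using that $\nabla v^{+}\cdot\mathbf n$ is affine along $\overline{DE}$ (so its mean over $\overline{DE}$ equals its value at the midpoint $M$) gives $\lambda=-\frac{\beta^{+}-\beta^{-}}{\beta^{-}}\,\nabla v^{+}(M)\cdot\mathbf n$, hence $|\lambda|\le C_\beta\|\nabla v^{+}\|_{L^\infty(K)}$ and therefore $\|\nabla v^{-}\|_{L^\infty(K)}\le C_\beta\|\nabla v^{+}\|_{L^\infty(K)}$, with the reverse inequality following by symmetry. To close the argument one needs a norm equivalence that is uniform in the position of $\Gamma$: on the sub-element of area $\ge|K|/2$, say $K^{+}$, the quadratic form $\int_{K^{+}}|\nabla v^{+}|^2dX$ controls $|b^{+}|^2+|c^{+}|^2+|d^{+}|^2$ with a positive constant depending only on $\hat K$, because the family of admissible ``larger'' sub-elements is compact and none of them degenerates; combining this with $d^{-}=d^{+}$ and $|b^{-}-b^{+}|,|c^{-}-c^{+}|\le|\lambda|$ bounds the full coefficient vector of $v^{\pm}$ by $C\|\sqrt\beta\nabla v\|_{L^2(K)}$ (with $C$ depending only on $\hat K$ and $\beta^+/\beta^-$), while $\|\beta v_d\|_{L^2(B)}^2$ is controlled by a fixed quadratic form in that same coefficient vector since $B\subset\partial\hat K$; this proves the reference bound, and reversing the scaling yields \eqref{eq:trace_ife_1}. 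The genuinely technical point — and the only place where independence of the interface location is used — is this uniform norm equivalence, together with checking that the structural identities above persist for every admissible, including nearly degenerate, intersection of $\Gamma$ with $K$; a complete treatment is given in \cite{TLin_YLin_XZhang_PPIFE_Elliptic}.
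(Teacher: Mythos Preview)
Your outline is correct and, in fact, considerably more detailed than what the paper itself offers: the paper does not prove this lemma at all but simply quotes it from \cite{TLin_YLin_XZhang_PPIFE_Elliptic}. Your scaling reduction, the tangential/normal decomposition in the linear case, and the observation that $d^{+}=d^{-}$ makes $v^{+}-v^{-}$ affine (whence the flux condition fixes $\lambda$ via the midpoint value) are exactly the structural ingredients used in that reference; since you explicitly defer the uniform norm-equivalence step to \cite{TLin_YLin_XZhang_PPIFE_Elliptic} as well, your treatment is fully aligned with the paper's.
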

As in \cite{TLin_YLin_XZhang_PPIFE_Elliptic}, using Young's inequality, trace inequalities and the definition of $\|\cdot\|_h$,
we can prove the coercivity of the bilinear form $a_\epsilon(\cdot,\cdot)$
on the IFE space $S_{h}(\Omega)$ with
respect to the energy norm $\|\cdot\|_h$. The result is stated in the lemma below.

\begin{lemma}  There exists a constant $\kappa>0$ such that
 \begin{equation}
 a_\epsilon(v_h,v_h)\geq\kappa\|v_h\|_h^2,\ \ \forall v_h\in
 S_{h}(\Omega) \label{eq:coarcivity}
 \end{equation}
 holds for $\epsilon=1$ unconditionally and holds for
 $\epsilon=0$ or $\epsilon=-1$ when the penalty
 parameter $\sigma_B^0$ in $a_\epsilon(\cdot,\cdot)$ is large
 enough and $\alpha\geq 1$.
\end{lemma}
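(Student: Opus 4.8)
The plan is to evaluate the bilinear form on the diagonal, recognize its symmetric part as $\|v_h\|_h^2$ exactly, and then absorb the remaining consistency/stabilization cross term into $\tfrac12\|v_h\|_h^2$ using the trace inequalities \eqref{eq:trace_2} and \eqref{eq:trace_ife_2} together with Young's inequality. Setting $w=v=v_h$ in \eqref{eq: bilinear form} gives
\[
a_\epsilon(v_h,v_h)=\sum_{K\in\mathcal{T}_h}\int_K\beta|\nabla v_h|^2\,dX
+(\epsilon-1)\sum_{B\in\mathring{\mathcal{E}}_h^i}\int_B\aver{\beta\nabla v_h\cdot\mathbf{n}_B}\jump{v_h}\,ds
+\sum_{B\in\mathring{\mathcal{E}}_h^i}\int_B\frac{\sigma_B^0}{|B|^\alpha}\jump{v_h}^2\,ds,
\]
in which the first and third sums are precisely $\|v_h\|_h^2$. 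For $\epsilon=1$ the middle sum vanishes identically, so \eqref{eq:coarcivity} holds with $\kappa=1$ unconditionally. For $\epsilon=0$ or $\epsilon=-1$, since $|\epsilon-1|\le2$, it remains only to show that the middle sum is at most $\tfrac12\|v_h\|_h^2$ in absolute value.

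For that, I would first estimate the flux trace $\|\aver{\beta\nabla v_h\cdot\mathbf{n}_B}\|_{L^2(B)}$ on each interior interface edge $B$ with neighbouring elements $K_{B,1},K_{B,2}$, at least one of which lies in $\mathcal{T}_h^i$: on an interface neighbour this is exactly \eqref{eq:trace_ife_2}, while on a non-interface neighbour, where $v_h$ is a true linear or bilinear polynomial and $\beta$ is constant, it follows from \eqref{eq:trace_2} and a standard inverse estimate. Since the Cartesian mesh is quasi-uniform, $h^{1/2}|K|^{-1/2}\le Ch^{-1/2}$ on every element, so these combine to $\|\aver{\beta\nabla v_h\cdot\mathbf{n}_B}\|_{L^2(B)}^2\le C_0 h^{-1}\big(\|\sqrt\beta\nabla v_h\|_{L^2(K_{B,1})}^2+\|\sqrt\beta\nabla v_h\|_{L^2(K_{B,2})}^2\big)$ with $C_0$ independent of $h$ and of the interface location. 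I then apply Young's inequality on each edge in the weighted form $2ab\le\eta\,\tfrac{|B|^\alpha}{\sigma_B^0}a^2+\tfrac1\eta\,\tfrac{\sigma_B^0}{|B|^\alpha}b^2$ with $a=\|\aver{\beta\nabla v_h\cdot\mathbf{n}_B}\|_{L^2(B)}$ and $b=\|\jump{v_h}\|_{L^2(B)}$; choosing $\eta=2$, the $b$-terms sum to $\tfrac12\sum_B\int_B\tfrac{\sigma_B^0}{|B|^\alpha}\jump{v_h}^2\,ds$, and for the $a$-terms I use the flux bound, the inequality $|B|^\alpha h^{-1}\le h^{\alpha-1}\le C$ (this is where the hypothesis $\alpha\ge1$ is needed), and the fact that each element borders at most four interface edges, to obtain a bound of the form $\dfrac{C_1}{\min_B\sigma_B^0}\sum_{K\in\mathcal{T}_h}\|\sqrt\beta\nabla v_h\|_{L^2(K)}^2$. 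Requiring $\min_B\sigma_B^0\ge 2C_1$ then makes the $a$-terms at most $\tfrac12\sum_K\|\sqrt\beta\nabla v_h\|_{L^2(K)}^2$; adding the two halves bounds the cross term by $\tfrac12\|v_h\|_h^2$, so $a_\epsilon(v_h,v_h)\ge\tfrac12\|v_h\|_h^2$ and \eqref{eq:coarcivity} holds with $\kappa=\tfrac12$ in these cases.

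The only genuinely delicate point is the flux trace estimate on interface elements: an IFE function is only piecewise linear or bilinear across the interface segment, hence lies in $H^1(K)$ but not in $H^2(K)$, and the classical trace inequality \eqref{eq:trace_2} cannot be applied to it directly. This is exactly the gap filled by the IFE trace inequality \eqref{eq:trace_ife_2}; with it in hand the rest is the familiar interior-penalty coercivity argument, the Cartesian structure being used only to convert $|K|^{-1/2}$ into $h^{-1/2}$. Since both the statement and this argument are essentially those of \cite{TLin_YLin_XZhang_PPIFE_Elliptic} for the elliptic interface problem, I would indicate these steps rather than reproduce every computation.
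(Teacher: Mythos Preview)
Your proposal is correct and follows essentially the same approach as the paper, which does not give a detailed proof but simply indicates that the coercivity is established ``as in \cite{TLin_YLin_XZhang_PPIFE_Elliptic}, using Young's inequality, trace inequalities and the definition of $\|\cdot\|_h$.'' Your argument supplies precisely those ingredients---the diagonal evaluation, the IFE trace inequality \eqref{eq:trace_ife_2} on interface elements, Young's inequality with the penalty weight, and the scaling $|B|^\alpha h^{-1}\le C$ from $\alpha\ge 1$---and you correctly identify the only nonstandard step, namely that \eqref{eq:trace_2} fails for IFE functions and must be replaced by \eqref{eq:trace_ife_2}.
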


For any $t\in[0,T]$, we define the elliptic projection of the exact solution $u(\cdot, t)$ as the IFE function $\tilde u_h(\cdot, t) \in S_h(\Omega)$ by
\begin{equation}
a_\epsilon(u-\tilde u_h, v_h)=0,\ \ \forall v_h\in
S_{h}(\Omega). \label{eq:ellip_proj}
\end{equation}

\begin{lemma}
Assume the exact solution $u$ is in $H^2(0,T;\tilde H^3(\Omega))$ and $\alpha=1$. Then there exists a constant $C$ such that for every $t\in [0,T]$ the following error estimates hold
\begin{eqnarray}
% \nonumber to remove numbering (before each equation)
  &&\|u-\tilde u_h\|_h\leq Ch\|u\|_{\tilde H^3(\Omega)}, \label{eq:ell_proj_est_1} \\
  &&\|(u-\tilde u_h)_{t}\|_h\leq Ch\|u_{t}\|_{\tilde H^3(\Omega)}. \label{eq:ell_proj_est_2} \\
  &&\|(u-\tilde u_h)_{tt}\|_h\leq Ch\|u_{tt}\|_{\tilde H^3(\Omega)}. \label{eq:ell_proj_est_3}
\end{eqnarray}
\end{lemma}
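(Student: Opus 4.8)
The plan is to derive \eqref{eq:ell_proj_est_1} by a C\'ea-type argument that combines the coercivity \eqref{eq:coarcivity} of $a_\epsilon(\cdot,\cdot)$ with the Galerkin orthogonality \eqref{eq:ellip_proj} and the approximation properties of the IFE interpolant, and then to obtain \eqref{eq:ell_proj_est_2}--\eqref{eq:ell_proj_est_3} almost immediately from \eqref{eq:ell_proj_est_1} by exploiting that $a_\epsilon(\cdot,\cdot)$ and $S_h(\Omega)$ are independent of $t$.

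For \eqref{eq:ell_proj_est_1}, fix $t\in[0,T]$, let $I_hu\in S_h(\Omega)$ denote the IFE interpolant of $u(\cdot,t)$ (whose interpolation error estimates on Cartesian meshes are available from \cite{XHe_Thesis_Bilinear_IFE,XHe_TLin_YLin_Bilinear_Approximation,ZLi_TLin_YLin_RRogers_linear_IFE,TLin_YLin_XZhang_PPIFE_Elliptic}), and write $u-\tilde u_h=(u-I_hu)+\xi$ with $\xi:=I_hu-\tilde u_h\in S_h(\Omega)$. By \eqref{eq:coarcivity}, the linearity of $a_\epsilon(\cdot,\cdot)$ in its first argument, and \eqref{eq:ellip_proj} tested against $\xi$,
\[
\kappa\|\xi\|_h^2\le a_\epsilon(\xi,\xi)=a_\epsilon(u-\tilde u_h,\xi)-a_\epsilon(u-I_hu,\xi)=-a_\epsilon(u-I_hu,\xi),
\]
so it suffices to show $|a_\epsilon(u-I_hu,\xi)|\le Ch\|u\|_{\tilde H^3(\Omega)}\|\xi\|_h$, which we do term by term in \eqref{eq: bilinear form}. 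The bulk term $\sum_K\int_K\beta\nabla\xi\cdot\nabla(u-I_hu)\,dX$ is bounded by Cauchy--Schwarz and the $H^1$ interpolation estimate for $u-I_hu$. The two interface-edge integrals, involving $\aver{\beta\nabla(u-I_hu)\cdot\mathbf{n}_B}$ and $\aver{\beta\nabla\xi\cdot\mathbf{n}_B}$, are paired by Cauchy--Schwarz with the penalty part of $\|\xi\|_h$; the factor $\aver{\beta\nabla(u-I_hu)\cdot\mathbf{n}_B}$ is estimated using the standard trace inequality \eqref{eq:trace_2} on the subpieces of $K$ where $u-I_hu$ is $H^2$ together with the IFE interpolation estimates, while the factor $\aver{\beta\nabla\xi\cdot\mathbf{n}_B}$ is controlled by the IFE trace inequalities \eqref{eq:trace_ife_1}--\eqref{eq:trace_ife_2} and $\|\jump{u-I_hu}\|_{L^2(B)}=\|\jump{I_hu}\|_{L^2(B)}$ by the known bound on the jump of the IFE interpolant across interface edges. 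Finally, the penalty term $\sum_B\int_B\frac{\sigma_B^0}{|B|^{\alpha}}\jump{u-I_hu}\jump{\xi}\,ds$ is bounded by Cauchy--Schwarz and the same jump estimate. With $\alpha=1$ these combine to $|a_\epsilon(u-I_hu,\xi)|\le Ch\|u\|_{\tilde H^3(\Omega)}\|\xi\|_h$, hence $\|\xi\|_h\le Ch\|u\|_{\tilde H^3(\Omega)}$, and the triangle inequality together with $\|u-I_hu\|_h\le Ch\|u\|_{\tilde H^2(\Omega)}$ yields \eqref{eq:ell_proj_est_1}.

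For \eqref{eq:ell_proj_est_2} and \eqref{eq:ell_proj_est_3}, note that \eqref{eq:ellip_proj} defines $\tilde u_h(\cdot,t)=P_hu(\cdot,t)$, where $P_h:\tilde H^2(\Omega)\to S_h(\Omega)$ is a fixed bounded linear operator (well defined by the coercivity \eqref{eq:coarcivity} and Lax--Milgram, since $a_\epsilon(\cdot,\cdot)$ does not depend on $t$). Because $u\in H^2(0,T;\tilde H^3(\Omega))$, the difference quotients of $u$ in $t$ converge in $\tilde H^3(\Omega)$, so the boundedness and linearity of $P_h$ give $\partial_t\tilde u_h=P_hu_t$ and $\partial_{tt}\tilde u_h=P_hu_{tt}$; equivalently, differentiating \eqref{eq:ellip_proj} in $t$ gives $a_\epsilon((u-\tilde u_h)_t,v_h)=0$ and $a_\epsilon((u-\tilde u_h)_{tt},v_h)=0$ for all $v_h\in S_h(\Omega)$. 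Thus $(u-\tilde u_h)_t$ and $(u-\tilde u_h)_{tt}$ are exactly the elliptic-projection errors of $u_t$ and $u_{tt}$, and \eqref{eq:ell_proj_est_2} and \eqref{eq:ell_proj_est_3} follow by applying \eqref{eq:ell_proj_est_1} with $u$ replaced by $u_t$ and by $u_{tt}$, which lie in $\tilde H^3(\Omega)$ for (almost) every $t$ by the assumed regularity.

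The coercivity/orthogonality skeleton and the differentiation in $t$ are routine; the delicate point, and the main obstacle, is the estimate of the interface-edge contributions to $a_\epsilon(u-I_hu,\xi)$. Unlike in classical discontinuous Galerkin analysis, $u-I_hu$ does not belong to $H^2(K)$ on an interface element $K$ (the IFE interpolant has a gradient jump across $\overline{DE}$, and $u$ itself has a gradient kink across $\Gamma\cap K$), so \eqref{eq:trace_2} must be applied subpiece by subpiece, the small mismatch region between $\overline{DE}$ and $\Gamma\cap K$ (of area $O(h^3)$) must be handled separately, and these must be combined with the IFE-specific trace inequalities \eqref{eq:trace_ife_1}--\eqref{eq:trace_ife_2} and sharp bounds on $\jump{I_hu}$ so that no power of $h$ is lost; this is where the defining conditions \eqref{eq: continuity1}--\eqref{eq: continuity2} of the IFE shape functions and the interpolation estimates of \cite{XHe_Thesis_Bilinear_IFE,XHe_TLin_YLin_Bilinear_Approximation,ZLi_TLin_YLin_RRogers_linear_IFE,TLin_YLin_XZhang_PPIFE_Elliptic} are indispensable. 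The exponent $\alpha=1$ in the penalty weight $\sigma_B^0/|B|^{\alpha}$ is precisely what balances the trace scaling $|B|^{1/2}|K|^{-1/2}\sim h^{-1/2}$ and makes the edge terms come out at order $h$.
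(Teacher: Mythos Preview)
Your proposal is correct. For \eqref{eq:ell_proj_est_2}--\eqref{eq:ell_proj_est_3} you use exactly the paper's argument: differentiate the orthogonality relation \eqref{eq:ellip_proj} in $t$ to see that $(\tilde u_h)_t$ and $(\tilde u_h)_{tt}$ are the elliptic projections of $u_t$ and $u_{tt}$, then apply \eqref{eq:ell_proj_est_1}. For \eqref{eq:ell_proj_est_1} itself the paper does not argue at all---it simply invokes the corresponding estimate already proved for the elliptic PPIFE method in \cite{TLin_YLin_XZhang_PPIFE_Elliptic}---whereas you unpack that result via the C\'ea-type argument (coercivity + Galerkin orthogonality + IFE interpolation/trace estimates). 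Your sketch is precisely the argument carried out in that reference, so this is a difference in level of detail rather than in mathematical route; the paper treats \eqref{eq:ell_proj_est_1} as a citation, you treat it as a derivation, and both are legitimate.
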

\begin{proof}
First, the estimate \eqref{eq:ell_proj_est_1} follows directly from the estimate derived for the PPIFE methods for elliptic problems in \cite{TLin_YLin_XZhang_PPIFE_Elliptic}.
Because of the linearity of the bilinear form, we have
that
\[
a_\epsilon\left((u-\tilde
u_h)_t, v_h\right)=\frac{d}{dt}a_\epsilon(u-\tilde u_h, v_h)=0,\ \
\forall v_h\in S_{h}(\Omega).
\]
This indicates that the time derivative of the elliptic projection is
the elliptic projection of the time derivative. Thus, for any given $t\in [0,T]$, $u_t\in\tilde H^3(\Omega)$, the estimate \eqref{eq:ell_proj_est_2}
follows from the estimate derived for the PPIFE methods for elliptic problems in \cite{TLin_YLin_XZhang_PPIFE_Elliptic} again.
Similarly, we can obtain \eqref{eq:ell_proj_est_3}.
\end{proof}

\subsection{Error estimation for the semi-discrete method}

The {\it a priori} error estimates for semi-discrete PPIFE method \eqref{eq:DG-IFE_semi}-\eqref{eq:DG-IFE_semi_ic} for parabolic interface problem is given in the following theorem.
\begin{theorem}
Assume that the exact solution $u$ to the parabolic
 interface problem \eqref{eq:parab_eq}-\eqref{eq:parab_eq_jump_2} is in $H^1(0,T;\tilde H^3(\Omega))$ for $\epsilon=-1$ and in
 $H^2(0,T;\tilde H^3(\Omega))$ when $\epsilon=0, 1$, and $u_0\in\tilde H^3(\Omega)$.
 Let $u_h$ be the PPIFE solution defined by semi-discrete method \eqref{eq:DG-IFE_semi}-\eqref{eq:DG-IFE_semi_ic} with
 $\alpha=1$ and $u_h(\cdot,0) = \tilde u_h(\cdot)$ being the elliptic projection of $u_0$. Then there exists a constant $C$ such that
 \begin{equation}
\|u(\cdot,t)-u_h(\cdot,t)\|_h\leq Ch\Big(\|u_0\|_{\tilde H^3(\Omega)}+\|u_t\|_{L^2(0,T;\tilde H^3(\Omega))}\Big), \ \ \forall t\geq 0,
\label{eq:semi_est_1}
 \end{equation}
 for $\epsilon=-1$, and
  \begin{eqnarray}
 &&\|u(\cdot,t)-u_h(\cdot,t)\|_h\nonumber\\
 &&\leq Ch\Big(\|u_0\|_{\tilde H^3(\Omega)}+\|u_t(\cdot,0)\|_{\tilde H^3(\Omega)}+\|u_t\|_{L^2(0,T;\tilde H^3(\Omega))}+\|u_{tt}\|_{L^2(0,T;\tilde H^3(\Omega))}\Big), \ \ \forall t\geq 0, \label{eq:semi_est_2}
 \end{eqnarray}
 for $\epsilon=0$ or 1.
 \end{theorem}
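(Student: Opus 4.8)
The plan is to use the standard elliptic-projection splitting of the error, as is customary for parabolic finite element analysis. Write $u - u_h = (u - \tilde u_h) + (\tilde u_h - u_h) =: \rho + \xi$, where $\tilde u_h(\cdot,t) \in S_h(\Omega)$ is the elliptic projection of $u(\cdot,t)$ defined by \eqref{eq:ellip_proj}. The term $\rho$ is controlled directly by Lemma on elliptic projection estimates: $\|\rho(\cdot,t)\|_h \leq Ch\|u(\cdot,t)\|_{\tilde H^3(\Omega)}$, and $\|u(\cdot,t)\|_{\tilde H^3(\Omega)}$ can be bounded by $\|u_0\|_{\tilde H^3(\Omega)} + \|u_t\|_{L^1(0,T;\tilde H^3(\Omega))}$ (and hence by the $L^2$-in-time norm) via the fundamental theorem of calculus in time. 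So the real work is estimating $\xi \in S_h(\Omega)$.

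First I would derive the error equation for $\xi$. Subtracting the semi-discrete scheme \eqref{eq:DG-IFE_semi} from the weak form \eqref{eq:weak_form} (both tested against $v_h \in S_h(\Omega)$) gives $(u_t - u_{h,t}, v_h) + a_\epsilon(u - u_h, v_h) = 0$. Using $a_\epsilon(\rho, v_h) = 0$ from the definition of the elliptic projection, this reduces to $(\xi_t, v_h) + a_\epsilon(\xi, v_h) = -(\rho_t, v_h)$ for all $v_h \in S_h(\Omega)$. Now set $v_h = \xi$. The coercivity Lemma gives $a_\epsilon(\xi,\xi) \geq \kappa\|\xi\|_h^2$, and $(\xi_t,\xi) = \tfrac12 \tfrac{d}{dt}\|\xi\|_{L^2}^2$, so $\tfrac12 \tfrac{d}{dt}\|\xi\|_{L^2}^2 + \kappa\|\xi\|_h^2 \leq \|\rho_t\|_{L^2}\|\xi\|_{L^2}$. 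This controls $\|\xi\|_{L^2}$ but not $\|\xi\|_h$ pointwise in $t$, which is what the theorem demands; this is the main obstacle, and it is also the reason the hypotheses differ between $\epsilon = -1$ and $\epsilon = 0,1$.

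For the symmetric case $\epsilon = -1$, I would instead test with $v_h = \xi_t$. Then $(\xi_t,\xi_t) + a_{-1}(\xi,\xi_t) = -(\rho_t,\xi_t)$, and since $a_{-1}$ is symmetric, $a_{-1}(\xi,\xi_t) = \tfrac12 \tfrac{d}{dt} a_{-1}(\xi,\xi)$. Using coercivity and $\|\xi_t\|_{L^2}^2 \geq 0$, absorb the right-hand side by Young's inequality to get $\tfrac{d}{dt} a_{-1}(\xi,\xi) \leq \|\rho_t\|_{L^2}^2$; integrating in $t$ and using $\xi(\cdot,0) = 0$ (because $u_h(\cdot,0)$ is chosen to be the elliptic projection of $u_0$, so $\xi(\cdot,0) = \tilde u_h(\cdot,0) - u_h(\cdot,0) = 0$) yields $\kappa\|\xi(\cdot,t)\|_h^2 \leq a_{-1}(\xi(\cdot,t),\xi(\cdot,t)) \leq \int_0^t \|\rho_t\|_{L^2}^2\,ds$. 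Combining with the bound $\|\rho_t\|_{L^2} \leq \|\rho_t\|_h \leq Ch\|u_t\|_{\tilde H^3(\Omega)}$ from \eqref{eq:ell_proj_est_2} and the triangle inequality $\|u - u_h\|_h \leq \|\rho\|_h + \|\xi\|_h$ gives \eqref{eq:semi_est_1}. For the nonsymmetric cases $\epsilon = 0, 1$, the trick $a_\epsilon(\xi,\xi_t) = \tfrac12\tfrac{d}{dt}a_\epsilon(\xi,\xi)$ fails, so instead I would differentiate the error equation in $t$: $(\xi_{tt}, v_h) + a_\epsilon(\xi_t, v_h) = -(\rho_{tt}, v_h)$, test with $v_h = \xi_t$, use coercivity to get control of $\int_0^t \|\xi_t\|_h^2\,ds$ together with $\|\xi_t(\cdot,0)\|_{L^2}$, and then return to the undifferentiated equation tested with $v_h = \xi$ and use coercivity plus the just-obtained control of $\xi_t$ to bound $\|\xi(\cdot,t)\|_h$ — this is why the $H^2(0,T;\tilde H^3(\Omega))$ regularity and the extra $\|u_t(\cdot,0)\|_{\tilde H^3(\Omega)}$ and $\|u_{tt}\|_{L^2(0,T;\tilde H^3(\Omega))}$ terms appear in \eqref{eq:semi_est_2}. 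The bound $\|\xi_t(\cdot,0)\|_{L^2}$ is estimated by testing the error equation at $t=0$ with $v_h = \xi_t(\cdot,0)$ and using $\xi(\cdot,0)=0$, which leaves $\|\xi_t(\cdot,0)\|_{L^2}^2 \leq \|\rho_t(\cdot,0)\|_{L^2}\|\xi_t(\cdot,0)\|_{L^2}$, hence $\|\xi_t(\cdot,0)\|_{L^2} \leq \|\rho_t(\cdot,0)\|_{L^2} \leq Ch\|u_t(\cdot,0)\|_{\tilde H^3(\Omega)}$. Throughout, all time-integration constants are benign and the Gronwall inequality is not even needed because the right-hand sides do not involve $\xi$ after the Young's-inequality absorptions.
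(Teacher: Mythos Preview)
Your overall architecture---elliptic-projection splitting, the error equation $(\xi_t,v_h)+a_\epsilon(\xi,v_h)=-(\rho_t,v_h)$, and the choice of test function $v_h=\xi_t$ in the symmetric case---matches the paper exactly, and your treatment of $\epsilon=-1$ is essentially identical to theirs.

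For $\epsilon=0,1$ you take a genuinely different route. The paper also tests the \emph{undifferentiated} equation with $v_h=\xi_t$ and then handles the asymmetry via the identity
\[
a_\epsilon(\xi,\xi_t)=\tfrac12\tfrac{d}{dt}a_\epsilon(\xi,\xi)+\tfrac12\bigl(a_\epsilon(\xi,\xi_t)-a_\epsilon(\xi_t,\xi)\bigr)
\ge \tfrac12\tfrac{d}{dt}a_\epsilon(\xi,\xi)-C\|\xi_t\|_h\|\xi\|_h,
\]
which after integration leaves $\int_0^t\|\xi_t\|_h^2$ and $\int_0^t\|\xi\|_h^2$ on the right; the first is bounded by differentiating in $t$ (as you do), and the second is absorbed by Gronwall---in fact the paper invokes Gronwall twice. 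Your plan instead tests the undifferentiated equation with $v_h=\xi$, reading off $\kappa\|\xi\|_h^2\le(\|\rho_t\|+\|\xi_t\|)\|\xi\|_{L^2}$ directly at each fixed $t$ and closing with the pointwise-in-$t$ bound on $\|\xi_t\|_{L^2}$ from the differentiated equation. This is more direct and, as you note, avoids Gronwall; the price is that it leans on a discrete Poincar\'e inequality $\|v\|_{L^2}\le C\|v\|_h$ for $v\in S_h(\Omega)$ to turn $\|\xi\|_{L^2}$ into $\|\xi\|_h$ on the right. That inequality is standard for this broken space (zero boundary data, jumps only on interface edges controlled by the penalty term) but is not stated in the paper, so you should flag it. Note also that your bound produces a pointwise term $\|\rho_t(\cdot,t)\|_{L^2}$; to recover the $L^2$-in-time norms appearing in \eqref{eq:semi_est_2} you should write $\|u_t(\cdot,t)\|_{\tilde H^3}\le \|u_t(\cdot,0)\|_{\tilde H^3}+\int_0^t\|u_{tt}\|_{\tilde H^3}$, which is fine under the assumed $H^2$-in-time regularity.

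One small slip: the inequality $\|\rho_t\|_{L^2}\le\|\rho_t\|_h$ you wrote is not valid as stated, since $\|\cdot\|_h$ is only a broken $H^1$-seminorm plus penalty and does not dominate $\|\cdot\|_{L^2}$ without an additional Poincar\'e argument (and $\rho_t=(u-\tilde u_h)_t$ is not even in $S_h(\Omega)$). What you actually need---and what the paper also uses implicitly when it passes from \eqref{eq:ell_proj_est_2} to the bound on $\int_0^t\|\eta_t\|^2$---is the direct $L^2$ estimate $\|\rho_t\|_{L^2}\le Ch\|u_t\|_{\tilde H^3}$ for the elliptic-projection error, which follows either by duality or by a Poincar\'e inequality on the broken space $H_h$.
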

\begin{proof}
Let $\tilde u_h$ be the elliptic projection of $u$ defined by \eqref{eq:ellip_proj} and we use it to split the
 error $u - u_h$ into two terms: $u - u_h = \eta - \xi$ with $\eta=u-\tilde u_h$ and $\xi=u_h-\tilde u_h$. For the first term, by \eqref{eq:ell_proj_est_1}, we have the following estimate:
 \begin{eqnarray}
 \|\eta(\cdot,t)\|_h&\leq& Ch\|u(\cdot,t)\|_{\tilde H^3(\Omega)}\leq Ch\Big(\|u_0\|_{\tilde H^3(\Omega)}+\int_0^t\|u_t\|_{\tilde H^3(\Omega)}d\tau\Big) \nonumber \\
 &\leq& Ch\Big(\|u_0\|_{\tilde H^3(\Omega)}+\|u_t\|_{L^2(0,T;\tilde H^3(\Omega))}\Big). \label{eq:semi_proof_1}
 \end{eqnarray}
Then, we proceed to bound $\|\xi\|_h$. From \eqref{eq:weak_form}, \eqref{eq:DG-IFE_semi} and \eqref{eq:ellip_proj}, we can see that $\xi$ satisfies the following equation:
\begin{equation}
 \left(\xi_t, v_h\right)+a_\epsilon(\xi,v_h)=\left(\eta_t, v_h\right),\ \ \forall
 v_h\in S_{h}(\Omega). \label{eq:semi_proof_2}
 \end{equation}
Choosing $v_h=\xi_t$ in \eqref{eq:semi_proof_2}, we have
\begin{equation}
 \|\xi_t\|^2+a_\epsilon(\xi, \xi_t)
 =\left(\eta_t, \xi_t\right). \label{eq:semi_proof_3}
 \end{equation}
\par
If $\epsilon=-1$, using the symmetry property of $a_\epsilon(\cdot,\cdot)$, Cauchy-Schwarz inequality and Young's inequality
in \eqref{eq:semi_proof_3},
we get
\begin{equation}
 \left\|\xi_t\right\|^2+\frac{1}{2}\frac{d}{dt}a_\epsilon(\xi,\xi)
 \leq\|\eta_t\|\left\|\xi_t\right\|\leq C\left\|\eta_t\right\|^2+\frac{1}{2}\left\|\xi_t\right\|^2. \label{eq:semi_proof_4}
 \end{equation}
For any $t\in (0,T]$, integrating both sides of \eqref{eq:semi_proof_4} from 0 to $t$, using the fact $\xi(\cdot,0)=0$ and \eqref{eq:ell_proj_est_2}, we obtain
\begin{equation}
\frac{1}{2}\int_0^t\left\|\xi_t\right\|^2+\frac{1}{2}a_\epsilon(\xi(\cdot,t),\xi(\cdot,t))
 \leq C\int_0^t\left\|\eta_t\right\|^2dt\leq Ch^2\int_0^T\left\|u_t\right\|_{\tilde H^3(\Omega)}^2. \label{eq:semi_proof_5}
\end{equation}
Using coercivity of $a_\epsilon(\cdot, \cdot)$ in \eqref{eq:semi_proof_5}, we have
\begin{equation}
\|\xi_t\|_{L^2(0,t;L^2(\Omega))}+\|\xi\|_h\leq Ch\|u_t\|_{L^2(0,T;\tilde H^3(\Omega))}. \label{eq:semi_proof_6}
\end{equation}
Finally, applying the triangle inequality, \eqref{eq:semi_proof_1} and \eqref{eq:semi_proof_6} to $u-u_h = \eta - \xi$ leads to \eqref{eq:semi_est_1}.\\

When $\epsilon=1$ or $0$,
$a_\epsilon(\cdot,\cdot)$ is not symmetric. However, we have
\begin{eqnarray}
a_\epsilon\left(\xi,
\xi_t\right)&=&\frac{1}{2}\frac{d}{dt}a_\epsilon(\xi,\xi)+\frac{1}{2}\left(a_\epsilon\left(\xi, \xi_t\right)-a_\epsilon\left(\xi_t,
\xi\right)\right) \nonumber \\
&\geq&\frac{1}{2}\frac{d}{dt}a_\epsilon(\xi,\xi)-C\|\xi_t\|_h\|\xi\|_h \nonumber \\
&\geq&\frac{1}{2}\frac{d}{dt}a_\epsilon(\xi,\xi)-\frac{C}{2}\left\|\xi_t\right\|_h^2-\frac{C}{2}\|\xi\|_h^2. \label{eq:semi_proof_7}
\end{eqnarray}
Substituting \eqref{eq:semi_proof_7} into \eqref{eq:semi_proof_3} and then integrating it from 0 to $t$, we can get
\begin{equation}
 \frac{1}{2}\int_0^t\|\xi_t\|^2d\tau+\frac{1}{2}\kappa\|\xi\|_h^2
 \leq
 C\int_0^t(\|\eta_t\|^2+\|\xi_t\|_h^2+\|\xi\|_h^2)d\tau. \label{eq:semi_proof_8}
\end{equation}
 Now we need the bound of
$\left\|\xi_t\right\|_h$. From \eqref{eq:semi_proof_2}, we can easily get
\begin{equation}
 \left(\xi_{tt}, v_h\right)+a_\epsilon(\xi_t, v_h)=\left(\eta_{tt}, v_h\right),\ \ \forall
 v_h\in S_{h}(\Omega), t\geq 0. \label{eq:semi_proof_9}
 \end{equation}
Choose $v_h=\xi_t$ in \eqref{eq:semi_proof_9} and use the coercivity of
$a_\epsilon(\cdot,\cdot)$ to get
\begin{eqnarray*}
 \frac{1}{2}\frac{d}{dt}\|\xi_{t}\|^2+\kappa \|\xi_t\|_h^2\leq
 \frac{1}{2}(\|\eta_{tt}\|^2+\|\xi_t\|^2).
\end{eqnarray*}
Integrating the above inequality from 0 to $t$ and using the Gronwall
inequality, we obtain
\begin{equation}
\int_0^t\|\xi_t\|_h^2d\tau\leq C\int_0^t\|\eta_{tt}\|^2d\tau+C\|\xi_t(\cdot,0)\|^2. \label{eq:semi_proof_10}
\end{equation}
Let $t=0$ and then choose $v_h=\xi_t(\cdot,0)$ in \eqref{eq:semi_proof_2} to get
\begin{equation}
\|\xi_t(\cdot,0)\|\leq \|\eta_t(\cdot,0)\|. \label{eq:semi_proof_11}
\end{equation}
Substituting \eqref{eq:semi_proof_10} and \eqref{eq:semi_proof_11} into \eqref{eq:semi_proof_8} and then using the Gronwall inequality again, we
obtain
\[
\int_0^t\|\xi_t\|^2d\tau+\|\xi\|_h^2
\leq C\int_0^t(\|\eta_t\|^2+\|\eta_{tt}\|^2)d\tau+C\|\eta_t(\cdot,0)\|^2.
\]
Applying \eqref{eq:ell_proj_est_2} and \eqref{eq:ell_proj_est_3} to the above yields
\begin{equation}
\left\|\xi_t\right\|_{L^2(0,t;L^2(\Omega))}+\|\xi\|_h\leq Ch\Big(\|u_t(\cdot,0)\|_{\tilde
 H^3(\Omega))}+\|u_{t}\|_{L^2(0,T;\tilde
 H^3(\Omega))}+\|u_{tt}\|_{L^2(0,T;\tilde
 H^3(\Omega))}\Big). \label{eq:semi_proof_12}
\end{equation}
Finally, applying the triangle inequality, \eqref{eq:semi_proof_1} and \eqref{eq:semi_proof_12} to $u-u_h = \eta - \xi$
yields \eqref{eq:semi_est_2}.
\end{proof}

\begin{remark}
By slightly modifying the proof for Theorem 3.1 we can show that estimates \eqref{eq:semi_est_1} and \eqref{eq:semi_est_2} still hold when
$\tilde u_h$ is chosen to be the IFE interpolation of $u_0$.
\end{remark}

\subsection{Error estimation for fully discrete methods}
%\SEC{5 \hspace{2mm}Error estimations of the fully discrete
%formulations} \setcounter{section}{5}\setcounter{equation}{0}
%\noindent
%Now we derive error estimations for the fully discrete PPIFE methods. We will consider two prototypical cases.
In all the discussion from now on, we assume that $u_h^0=\tilde u_h$ is the elliptic projection of $u_0$ in the initial condition for the parabolic interface problem.
Also, for a function $\phi(t)$, we let $\phi^n = \phi(t^n), n \geq 0$.
\subsubsection{Backward Euler method}
The backward Euler method corresponds to the method described by \eqref{eq:DG-IFE_full_disc} with $\theta=1$.
From \eqref{eq:weak_form}, \eqref{eq:DG-IFE_full_disc} and \eqref{eq:ellip_proj}, we get
\begin{eqnarray}
&&\left(\partial_t \xi^n,
v_h\right)+a_\epsilon(\xi^{n},v_h)=\left(\partial_t \eta^n,
v_h\right)+(r^n, v_h),\ \ \forall v_h\in
S_{h}(\Omega), \label{eq:Euler_1}
\end{eqnarray}
where $r^n=-(u_t^n-\partial_t u^n )$.
We choose the test function $v_h=\partial_t\xi^n$ in \eqref{eq:Euler_1} and use the Cauchy-Schwarz inequality on the right hand side to obtain
 \begin{equation}
\|\partial_t\xi^n\|^2+
a_\epsilon(\xi^n,\partial_t\xi^{n})\leq\Big(\|\partial_t \eta^n\|+\|r^n\|\Big)\|\partial_t\xi^n\|\leq
\Big(\|\partial_t \eta^n\|^2+\|r^n\|^2\Big)+\frac{1}{2}\|\partial_t\xi^n\|^2. \label{eq:Euler_2}
\end{equation}
There are three cases depending on the parameter $\epsilon$. We start from the case in which $\epsilon=-1$. By the symmetry and the coercivity of the bilinear form $a_\epsilon(\cdot,\cdot)$, we have
\begin{eqnarray*}
a_\epsilon(\xi^n,\partial_t\xi^{n})&=&\frac{1}{\Delta t}a_\epsilon(\xi^n,\xi^n-\xi^{n-1})\\
&=&\frac{1}{2\Delta t}\Big(a_\epsilon(\xi^n,\xi^n)-a_\epsilon(\xi^{n-1},\xi^{n-1})\Big)+\frac{1}{2\Delta t}a_\epsilon(\xi^n-\xi^{n-1},\xi^n-\xi^{n-1})\\
&\geq&\frac{1}{2\Delta t}\Big(a_\epsilon(\xi^n,\xi^n)-a_\epsilon(\xi^{n-1},\xi^{n-1})\Big).
\end{eqnarray*}
Thus, we have
\begin{eqnarray}
&&\frac{1}{2}\|\partial_t\xi^n\|^2+
\frac{1}{2\Delta t}\Big(a_\epsilon(\xi^n,\xi^{n})-a_\epsilon(\xi^{n-1},\xi^{n-1})\Big)\leq\|\partial_t \eta^n\|^2+\|r^n\|^2. \label{eq:Euler_3}
\end{eqnarray}
Multiply \eqref{eq:Euler_3} by $2\Delta t$ and then sum over $n$ to get
\begin{eqnarray}
&&\Delta t \sum\limits_{n=1}^{k}\|\partial_t\xi^n\|^2+
a_\epsilon(\xi^k,\xi^{k})\leq2\Delta t\sum\limits_{n=1}^k\Big(\|\partial_t \eta^n\|^2+\|r^n\|^2\Big). \label{eq:Euler_4}
\end{eqnarray}
By H\"{o}lder's inequality and \eqref{eq:ell_proj_est_2}, we have
\begin{eqnarray}
\|\partial_t \eta^n\|^2&=&\int_{\Omega}\left(\frac{\eta^n-\eta^{n-1}}{\Delta t}\right)^2dX
= \int_{\Omega}\Big(\frac{1}{\Delta t}\int_{t^{n-1}}^{t^n}\eta_td\tau\Big)^2dX \nonumber \\
&\leq&\frac{1}{\Delta t}\int_{t^{n-1}}^{t^n}\|\eta_t\|^2d\tau\leq C\frac{h^2}{\Delta t}\int_{t^{n-1}}^{t^n}\|u_t\|_{\tilde H^3(\Omega)}^2d\tau. \label{eq:Euler_5}
\end{eqnarray}
Applying Taylor formula and H\"{o}lder's inequality, we have
\begin{equation}\label{eq:Euler_6}
  \|r^n\|^2=\int_{\Omega}|u_t^n-\partial_t u^n|^2dX=\int_{\Omega}\left|\frac{1}{\Delta t}\int_{t^{n-1}}^{t^n}(t-t^{n-1})u_{tt}dt\right|^2dX
\leq \frac{\Delta t}{3}\int_{t^{n-1}}^{t^n}\|u_{tt}\|^2d\tau.
\end{equation}
Substituting \eqref{eq:Euler_5} and \eqref{eq:Euler_6} into \eqref{eq:Euler_4} and then using the coercivity of $a_\epsilon(\cdot,\cdot)$, we obtain
 \begin{eqnarray}
&&\Delta t \sum\limits_{n=1}^{k}\|\partial_t\xi^n\|^2+
\|\xi^k\|_h^2\leq C\Big(h^2\|u_t\|_{L^2(0,T;\tilde H^3(\Omega))}^2+(\Delta t)^2\|u_{tt}\|_{L^2(0,T;L^2(\Omega))}^2\Big). \label{eq:Euler_7}
\end{eqnarray}
Finally, applying the triangle inequality, \eqref{eq:semi_proof_2} and \eqref{eq:Euler_7} to $u^k-u_h^k = \eta^k - \xi^k$ yields
\begin{equation}
\|u^k-u_h^k\|_h\leq C\Big(h\big(\|u_0\|_{\tilde H^3(\Omega)}+\|u_t\|_{L^2(0,T;\tilde H^3(\Omega))}\big)+\Delta t\|u_{tt}\|_{L^2(0,T;L^2(\Omega))}\Big)
\label{eq:Euler_8}
\end{equation}
for any integer $k\geq 0$.

\par
Now we turn to the cases where $\epsilon=0$ or $\epsilon = 1$ that make the bilinear form in the PPIFE methods nonsymmetric. We start from
\begin{eqnarray*}
a_\epsilon(\xi^n,\partial_t\xi^{n})&=&\frac{1}{\Delta t}a_\epsilon(\xi^n,\xi^n-\xi^{n-1})\\
&=&\frac{1}{2\Delta t}\Big(a_\epsilon(\xi^n,\xi^n)-a_\epsilon(\xi^{n-1},\xi^{n-1})\Big)+\frac{1}{2\Delta t}a_\epsilon(\xi^n,\xi^n-\xi^{n-1})-\frac{1}{2\Delta t}a_\epsilon(\xi^n-\xi^{n-1},\xi^{n-1})\\
&=&\frac{1}{2\Delta t}\Big(a_\epsilon(\xi^n,\xi^n)-a_\epsilon(\xi^{n-1},\xi^{n-1})\Big)+\frac{1}{2}\Big(a_\epsilon(\xi^n,\partial_t\xi^{n})-a_\epsilon(\partial_t\xi^{n},\xi^{n-1})\Big)\\
&\geq&\frac{1}{2\Delta t}\Big(a_\epsilon(\xi^n,\xi^n)-a_\epsilon(\xi^{n-1},\xi^{n-1})\Big)-C\Big(\|\partial_t\xi^n\|_h^2+\|\xi^{n-1}\|_h^2+\|\xi^{n}\|_h^2\Big).
\end{eqnarray*}
Substituting it into \eqref{eq:Euler_2} leads to
\begin{eqnarray}
&&\frac{1}{2}\|\partial_t\xi^n\|^2+
\frac{1}{2\Delta t}\Big(a_\epsilon(\xi^n,\xi^{n})-a_\epsilon(\xi^{n-1},\xi^{n-1})\Big) \nonumber \\
&&\qquad\leq\|\partial_t \eta^n\|^2
+\|r^n\|^2+C\Big(\|\partial_t\xi^n\|_h^2+\|\xi^{n-1}\|_h^2+\|\xi^{n}\|_h^2\Big). \label{eq:Euler_9}
\end{eqnarray}
Multiply \eqref{eq:Euler_9} by $2\Delta t$ and sum over $n$ to obtain
\begin{eqnarray}
&&\sum\limits_{n=1}^k\Delta t\|\partial_t\xi^n\|^2+
\kappa\|\xi^k\|_h^2\leq\sum\limits_{n=1}^k\Delta t(\|\partial_t \eta^n\|^2
+\|r^n\|^2)+C\sum\limits_{n=1}^k\Delta t\|\partial_t\xi^n\|_h^2 + C\sum\limits_{n=1}^k\Delta t\|\xi^{n}\|_h^2. \label{eq:Euler_10}
\end{eqnarray}
In order to bound $\sum\limits_{n=1}^k\Delta t\|\partial_t\xi^n\|_h^2$, we first derive from \eqref{eq:Euler_1} that
\begin{eqnarray}
&&\frac{1}{\Delta t}\left(\partial_t \xi^n-\partial_t\xi^{n-1},
v_h\right)+a_\epsilon(\partial_t\xi^{n},v_h)=\left(\partial_{tt} \eta^n,
v_h\right)+(\partial_t r^n, v_h),\ \ \forall v_h\in
S_{h}(\Omega). \label{eq:Euler_11}
\end{eqnarray}
Let $v_h=\partial_t \xi^n$ in \eqref{eq:Euler_11} to get
\begin{eqnarray*}
&&\frac{1}{2\Delta t}\left(\|\partial_t \xi^n\|^2-\|\partial_t\xi^{n-1}\|^2
\right)+\kappa\|\partial_t\xi^{n}\|_h^2\leq (\|\partial_{tt} \eta^n\|
+\|\partial_t r^n\|)\|\partial_t \xi^n\|.
\end{eqnarray*}
Then we can easily obtain
\begin{eqnarray}
&&\|\partial_t \xi^k\|^2
+\sum\limits_{n=2}^k\Delta t\|\partial_t\xi^{n}\|_h^2\leq C\sum\limits_{n=2}^k\Delta t(\|\partial_{tt} \eta^n\|^2
+\|\partial_t r^n\|^2)+C\|\partial_t \xi^1\|^2. \label{eq:Euler_12}
\end{eqnarray}
Let $n=1$ and $v_h=\partial_t \xi^1=\xi^1/\Delta t$ in \eqref{eq:Euler_1}, then we have
\begin{eqnarray*}
&&\|\partial_t \xi^1\|^2+
\frac{1}{\Delta t}a_\epsilon(\xi^1,\xi^{1})\leq(\|\partial_t \eta^1\|+\|r^1\|)\|\partial_t \xi^1\|.
\end{eqnarray*}
Thus
\begin{eqnarray*}
&&\|\partial_t \xi^1\|^2+\frac{1}{\Delta t}\|\xi^1\|_h^2\leq C(\|\partial_t \eta^1\|^2+\|r^1\|^2).
\end{eqnarray*}
Applying this to \eqref{eq:Euler_12} yields
%Noting that $\Delta t\|\partial_t\xi^{1}\|_h^2=\|\xi^{1}\|_h^2/\Delta t$, substitute the above inequality into \eqref{eq:Euler_12} and apply the Gronwall inequality to get
\begin{eqnarray}
&&\sum\limits_{n=1}^k\Delta t\|\partial_t\xi^{n}\|_h^2\leq C\sum\limits_{n=2}^k\Delta t(\|\partial_{tt} \eta^n\|^2
+\|\partial_t r^n\|^2)+C(\|\partial_t \eta^1\|^2+\|r^1\|^2). \label{eq:Euler_13}
\end{eqnarray}
Inserting \eqref{eq:Euler_13} into \eqref{eq:Euler_10}, then applying the Gronwall inequality, we obtain
\begin{eqnarray}
&&\|\xi^k\|_h^2 \leq C\sum\limits_{n=1}^k\Delta t(\|\partial_t \eta^n\|^2
+\|r^n\|^2)+C\sum\limits_{n=2}^k\Delta t(\|\partial_{tt} \eta^n\|^2
+\|\partial_t r^n\|^2)+C(\|\partial_t \eta^1\|^2+\|r^1\|^2). \label{eq:Euler_14}
\end{eqnarray}
We now estimate the last four terms in \eqref{eq:Euler_14}. It is easily to see that
\begin{eqnarray*}\nonumber
\|\partial_{tt} \eta^n\|^2&=&\int_{\Omega}\left(\frac{\eta^n-2\eta^{n-1}+\eta^{n-2}}{(\Delta t)^2}\right)^2dX\\
&=&\int_{\Omega}\left(\frac{1}{(\Delta t)^2}\int_{t^{n-1}}^{t^n}\eta_{tt}(t^n-t)dt-\frac{1}{(\Delta t)^2}\int_{t^{n-2}}^{t^{n-1}}\eta_{tt}(t^{n-1}-t)dt\right)^2dX\\
&\leq&\frac{1}{3\Delta t}\int_{t^{n-2}}^{t^n}\|\eta_{tt}\|^2dt.
\end{eqnarray*}
This inequality and \eqref{eq:ell_proj_est_3} lead to
\begin{eqnarray}
\sum\limits_{n=2}^k\Delta t\|\partial_{tt} \eta^n\|^2\leq Ch^2\|u_{tt}\|^2_{L^2(0,T;\tilde H^3(\Omega))}. \label{eq:Euler_15}
\end{eqnarray}
Also, we have
\begin{eqnarray*}\nonumber
\partial_{t}r^n&=&\frac{u_t^n-u_t^{n-1}}{\Delta t}-\frac{u^n-2u^{n-1}+u^{n-2}}{(\Delta t)^2}\\
&=&\int_{t^{n-1}}^{t^n}u_{ttt}dt-\frac{1}{(\Delta
t)^2}\int_{t^{n-1}}^{t^n}u_{ttt}(t^{n-1}-t)^2dt+\frac{1}{(\Delta
t)^2}\int_{t^{n-2}}^{t^{n-1}}u_{ttt}(t-t^{n-2})^2dt.
\end{eqnarray*}
Then, the application of H\"{o}lder's inequality leads to
\begin{eqnarray}
\sum\limits_{n=2}^k\Delta t\|\partial_t r^n\|^2&\leq& (\Delta
t)^2\sum\limits_{n=2}^k\left(\int_{t^{n-1}}^{t^n}\|u_{ttt}\|^2dt + \frac{1}{5}\int_{t^{n-1}}^{t^n}\|u_{ttt}\|^2dt
+\frac{1}{5}\int_{t^{n-2}}^{t^{n-1}}\|u_{ttt}\|^2dt\right) \nonumber\\
&\leq& C(\Delta t)^2\|u_{ttt}\|^2_{L^2(0,T;L^2(\Omega))}. \label{eq:Euler_16}
\end{eqnarray}
As for the last two terms on the right hand side of \eqref{eq:Euler_14}, we have
\begin{eqnarray}
\|\partial_t \eta^1\|^2\leq \frac{1}{\Delta t}\int_0^{\Delta
t}\|\eta_t\|^2dt\leq h^2\left(\frac{1}{\Delta t}\int_0^{\Delta
t}\|u_t\|_{\tilde H^3(\Omega)}^2dt\right) \label{eq:Euler_17}
\end{eqnarray}
and, by \eqref{eq:Euler_6},
 \begin{eqnarray}
 \|r^1\|^2=\int_{\Omega}|u_t^1-\partial_t u^1|^2dX\leq \frac{(\Delta t)^2}{3}\left(\frac{1}{\Delta t}\int_{0}^{\Delta t}\|u_{tt}\|^2dt\right).
 \label{eq:Euler_18}
 \end{eqnarray}
 Now, substituting \eqref{eq:Euler_5}, \eqref{eq:Euler_6} and \eqref{eq:Euler_15}-\eqref{eq:Euler_18} into \eqref{eq:Euler_14}, we
 obtain
\begin{eqnarray*}\nonumber
\|\xi^k\|_h^2&\leq& C\left(\|u_t\|_{L^2(0,T;\tilde
H^3(\Omega))}^2+\|u_{tt}\|^2_{L^2(0,T;\tilde
H^3(\Omega))}+\frac{1}{\Delta t}\int_0^{\Delta t}\|u_t\|_{\tilde
H^3(\Omega)}^2dt\right)h^2\\
&&+C\left(\|u_{tt}\|_{L^2(0,T;L^2(\Omega))}^2+\|u_{ttt}\|^2_{L^2(0,T;L^2(\Omega))}+\frac{1}{\Delta
t}\int_{0}^{\Delta t}\|u_{tt}\|^2dt\right)(\Delta t)^2.
\end{eqnarray*}
Again, applying the estimate for $\xi^k$, the triangle inequality and \eqref{eq:semi_proof_1} to
$u^k - u_h^k = \eta^k - \xi^k$, we obtain
\begin{eqnarray*}\nonumber
&&\|u^k-u_h^k\|_h\\
&\leq& C\left(\|u_0\|_{\tilde
H^3(\Omega)}+\|u_t\|_{L^2(0,T;\tilde
H^3(\Omega))}+\|u_{tt}\|_{L^2(0,T;\tilde
H^3(\Omega))}+\left(\frac{1}{\Delta t}\int_0^{\Delta
t}\|u_t\|_{\tilde
H^3(\Omega)}^2dt\right)^{1/2}\right)h\\
&&+C\left(\|u_{tt}\|_{L^2(0,T;L^2(\Omega))}+\|u_{ttt}\|_{L^2(0,T;L^2(\Omega))}+\left(\frac{1}{\Delta
t}\int_{0}^{\Delta t}\|u_{tt}\|^2dt\right)^{1/2}\right)\Delta t.
\end{eqnarray*}
Now let us summarize the analysis above for the backward Euler PPIFE method in the following theorem.

\begin{theorem}
Assume that the exact solution $u$ to the parabolic interface problem \eqref{eq:parab_eq}-\eqref{eq:parab_eq_jump_2} is in $H^2(0,T;\tilde H^3(\Omega))\cap H^3(0,T;L^2(\Omega))$ and
 $u_0\in \tilde H^3(\Omega)$.
 Let the sequence $\Big\{u_h^n\Big\}_{n=0}^{N_t}$ be the solution to
the backward Euler PPIFE method \eqref{eq:DG-IFE_full_disc}-\eqref{eq:DG-IFE_full_disc_ic}. Then, we have the following estimates:
\begin{description}
  \item[(1)] If $\epsilon=-1$, then there exists a positive constant $C$ independent of $h$ and $\Delta t$ such that
   \begin{equation}
\max_{0\leq n\leq N_t}\|u^n-u_h^n\|_h\leq C\Big(h\big(\|u_0\|_{\tilde H^3(\Omega)}+\|u_t\|_{L^2(0,T;\tilde H^3(\Omega))}\big)+\Delta t\|u_{tt}\|_{L^2(0,T;L^2(\Omega))}\Big). \label{eq:Euler_19}
\end{equation}
  \item[(2)] If $\epsilon=0$ or 1, then there exists a positive constant $C$ independent of $h$ and $\Delta t$ such that
  \begin{eqnarray}\nonumber
&&\max_{0\leq n\leq N_t}\|u^n-u_h^n\|_h\\ \nonumber
&\leq& C\left(\|u_0\|_{\tilde
H^3(\Omega)}+\|u_t\|_{L^2(0,T;\tilde
H^3(\Omega))}+\|u_{tt}\|_{L^2(0,T;\tilde
H^3(\Omega))}+\left(\frac{1}{\Delta t}\int_0^{\Delta
t}\|u_t\|_{\tilde
H^3(\Omega)}^2dt\right)^{1/2}\right)h\\
&&+C\left(\|u_{tt}\|_{L^2(0,T;L^2(\Omega))}+\|u_{ttt}\|_{L^2(0,T;L^2(\Omega))}+\left(\frac{1}{\Delta
t}\int_{0}^{\Delta t}\|u_{tt}\|^2dt\right)^{1/2}\right)\Delta t. \label{eq:Euler_20}
\end{eqnarray}
\end{description}
\end{theorem}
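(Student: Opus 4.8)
The plan is to use the standard parabolic error-splitting argument, adapted to the PPIFE bilinear form. Let $\tilde u_h(\cdot,t)\in S_h(\Omega)$ be the elliptic projection of $u(\cdot,t)$ given by \eqref{eq:ellip_proj}, and write $u^n-u_h^n=\eta^n-\xi^n$ with $\eta=u-\tilde u_h$ and $\xi=u_h-\tilde u_h\in S_h(\Omega)$. The term $\eta^n$ is controlled at once by the elliptic projection estimates \eqref{eq:ell_proj_est_1}--\eqref{eq:ell_proj_est_3} together with $\|u(\cdot,t)\|_{\tilde H^3(\Omega)}\le \|u_0\|_{\tilde H^3(\Omega)}+\int_0^t\|u_t\|_{\tilde H^3(\Omega)}\,d\tau$, so the whole task reduces to estimating $\|\xi^n\|_h$. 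Subtracting the backward Euler scheme \eqref{eq:DG-IFE_full_disc} with $\theta=1$ from the weak form \eqref{eq:weak_form} evaluated at $t=t^n$ and using \eqref{eq:ellip_proj}, I would first derive the discrete error equation $(\partial_t\xi^n,v_h)+a_\epsilon(\xi^n,v_h)=(\partial_t\eta^n,v_h)+(r^n,v_h)$ for all $v_h\in S_h(\Omega)$, where $r^n=-(u_t^n-\partial_t u^n)$ is the time-stepping consistency error.

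For the symmetric case $\epsilon=-1$ I would test with $v_h=\partial_t\xi^n$ and use the polarization identity $a_\epsilon(\xi^n,\xi^n-\xi^{n-1})=\tfrac12\big(a_\epsilon(\xi^n,\xi^n)-a_\epsilon(\xi^{n-1},\xi^{n-1})\big)+\tfrac12 a_\epsilon(\xi^n-\xi^{n-1},\xi^n-\xi^{n-1})$; discarding the last (nonnegative, by coercivity \eqref{eq:coarcivity}) term, multiplying by $2\Delta t$ and summing makes the $a_\epsilon$ contributions telescope. The right-hand side is handled with Young's inequality, the bound $\|\partial_t\eta^n\|^2\le (\Delta t)^{-1}\int_{t^{n-1}}^{t^n}\|\eta_t\|^2\,d\tau$ combined with \eqref{eq:ell_proj_est_2}, and a Taylor expansion giving $\|r^n\|^2\le \tfrac{\Delta t}{3}\int_{t^{n-1}}^{t^n}\|u_{tt}\|^2\,d\tau$; since $\xi^0=0$, coercivity then yields $\|\xi^k\|_h\le C\big(h\|u_t\|_{L^2(0,T;\tilde H^3(\Omega))}+\Delta t\|u_{tt}\|_{L^2(0,T;L^2(\Omega))}\big)$, and the triangle inequality gives \eqref{eq:Euler_19}.

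The hard part is the nonsymmetric case $\epsilon=0,1$, where testing with $\partial_t\xi^n$ leaves the extra skew term $\tfrac12\big(a_\epsilon(\xi^n,\partial_t\xi^n)-a_\epsilon(\partial_t\xi^n,\xi^{n-1})\big)$; using the IFE trace inequalities \eqref{eq:trace_ife_1}--\eqref{eq:trace_ife_2} this is bounded by $C(\|\partial_t\xi^n\|_h^2+\|\xi^{n-1}\|_h^2+\|\xi^n\|_h^2)$. The $\|\xi^n\|_h^2$ pieces will eventually be absorbed by a discrete Gronwall inequality, but the genuinely new quantity $\|\partial_t\xi^n\|_h$ must be estimated separately. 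To do this I would difference the error equation in $n$ to obtain $(\Delta t)^{-1}(\partial_t\xi^n-\partial_t\xi^{n-1},v_h)+a_\epsilon(\partial_t\xi^n,v_h)=(\partial_{tt}\eta^n,v_h)+(\partial_t r^n,v_h)$, test with $v_h=\partial_t\xi^n$, invoke coercivity, and sum to get $\sum_{n=1}^k\Delta t\|\partial_t\xi^n\|_h^2\le C\sum_{n=2}^k\Delta t(\|\partial_{tt}\eta^n\|^2+\|\partial_t r^n\|^2)+C(\|\partial_t\eta^1\|^2+\|r^1\|^2)$, where $\|\partial_t\xi^1\|$ is controlled by taking $n=1$, $v_h=\partial_t\xi^1=\xi^1/\Delta t$ in the error equation. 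Substituting this into the summed main inequality and applying the discrete Gronwall inequality a second time produces a bound for $\|\xi^k\|_h^2$ in terms of $\|\partial_t\eta^n\|$, $\|\partial_{tt}\eta^n\|$, $\|r^n\|$, $\|\partial_t r^n\|$ and initial-step quantities.

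Finally, the remaining pieces are routine: $\|\partial_{tt}\eta^n\|$ and $\|\partial_t\eta^1\|$ follow from integral-remainder Taylor formulas together with \eqref{eq:ell_proj_est_2}--\eqref{eq:ell_proj_est_3}, while $\|\partial_t r^n\|$ and $\|r^1\|$ follow from Taylor's theorem with integral remainder and H\"older's inequality, yielding the factors $(\Delta t)^2\|u_{ttt}\|^2_{L^2(0,T;L^2(\Omega))}$ and $(\Delta t)^2\|u_{tt}\|^2$; summing these and combining with the $\eta^k$ bound via the triangle inequality gives \eqref{eq:Euler_20}. The points needing care are the treatment of the nonsymmetric cross term through the IFE trace inequalities, the handling of the first time step (which is why the averaged quantities $\tfrac{1}{\Delta t}\int_0^{\Delta t}\|u_t\|_{\tilde H^3(\Omega)}^2\,dt$ and $\tfrac{1}{\Delta t}\int_0^{\Delta t}\|u_{tt}\|^2\,dt$ appear in the final estimate), and tracking the regularity $H^2(0,T;\tilde H^3(\Omega))\cap H^3(0,T;L^2(\Omega))$ that is exactly what makes $\|u_{tt}\|_{L^2(0,T;\tilde H^3(\Omega))}$ and $\|u_{ttt}\|_{L^2(0,T;L^2(\Omega))}$ finite.
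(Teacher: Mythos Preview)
Your proposal is correct and follows essentially the same route as the paper: the same error splitting $u^n-u_h^n=\eta^n-\xi^n$ via the elliptic projection, the same error equation tested with $\partial_t\xi^n$, the same polarization/telescoping for $\epsilon=-1$, and for $\epsilon=0,1$ the same bound on the skew term, the same differenced equation tested with $\partial_t\xi^n$ to control $\sum\Delta t\|\partial_t\xi^n\|_h^2$, the same special handling of $n=1$, and the same Taylor/H\"older estimates for $\partial_t\eta^n$, $\partial_{tt}\eta^n$, $r^n$, $\partial_t r^n$. The only cosmetic difference is that you name the IFE trace inequalities explicitly when bounding the skew term, whereas the paper simply asserts the continuity bound $C(\|\partial_t\xi^n\|_h^2+\|\xi^{n-1}\|_h^2+\|\xi^n\|_h^2)$.
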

\subsubsection{Crank-Nicolson method}
Now we conduct the  error analysis for the  Crank-Nicolson method which corresponds to $\theta=1/2$ in \eqref{eq:DG-IFE_full_disc}.
From \eqref{eq:weak_form}, \eqref{eq:DG-IFE_full_disc} and \eqref{eq:ellip_proj}, we have
\begin{eqnarray}
&&\left(\partial_t \xi^n,
v_h\right)+\frac{1}{2}a_\epsilon(\xi^{n}+\xi^{n-1},v_h)=(\partial_t \eta^n,
v_h)+(r_1^n, v_h)+(r_2^n, v_h),\ \ \forall v_h\in
S_{h}(\Omega), \label{eq:CN_21}
\end{eqnarray}
where
\begin{eqnarray*}
&&r_1^n=u_t^{n-1/2}-\frac{1}{2}(u_t^n+u_t^{n-1}),~~r_2^n=-(u_t^{n-1/2}-\partial_t u^n).
\end{eqnarray*}
Taking $v_h=\partial_t \xi^n=(\xi^{n}-\xi^{n-1})/\Delta t$ in \eqref{eq:CN_21} and applying the Cauchy-Schwarz inequality, we get
\begin{eqnarray}
\|\partial_t \xi^n\|^2+
\frac{1}{2\Delta t}a_\epsilon(\xi^{n}+\xi^{n-1},\xi^{n}-\xi^{n-1})&\leq&\Big(\|\partial_t \eta^n\|
+\|r_1^n\|+\|r_2^n\|\Big)\|\partial_t \xi^n\| \nonumber \\
&\leq&C\Big(\|\partial_t \eta^n\|^2
+\|r_1^n\|^2+\|r_2^n\|^2\Big)+\frac{1}{2}\|\partial_t \xi^n\|^2. \label{eq:CN_22}
\end{eqnarray}
If $\epsilon=-1$, due to the
 symmetry of $a_\epsilon(\cdot,\cdot)$, we can rewrite \eqref{eq:CN_22} as
\begin{eqnarray}
\|\partial_t \xi^n\|^2+
\frac{1}{2\Delta t}\Big(a_\epsilon(\xi^{n},\xi^{n})-a_\epsilon(\xi^{n-1},\xi^{n-1})\Big) &\leq& C\Big(\|\partial_t \eta^n\|^2 +\|r_1^n\|^2+\|r_2^n\|^2\Big). \label{eq:CN_23}
\end{eqnarray}
Multiplying \eqref{eq:CN_23} by $2\Delta t$ and summing over $n$, we have
\begin{eqnarray}
\kappa\|\xi^k\|_h^2\leq a_\epsilon(\xi^{k},\xi^{k})&\leq&C\sum\limits_{n=1}^k\Delta t\Big(\|\partial_t \eta^n\|^2
+\|r_1^n\|^2+\|r_2^n\|^2\Big). \label{eq:CN_24}
\end{eqnarray}
We note that \eqref{eq:Euler_5} is still a valid estimation for $\|\partial_t \eta^n\|^2$; hence, we proceed to estimate $\|r_1^n\|^2$ and $\|r_2^n\|^2$.
From the Taylor formula and H\"{o}lder's inequality, we obtain
\begin{eqnarray}\nonumber
\|r_1^n\|^2&=&\int_{\Omega}\Big(u_t^{n-1/2}-\frac{1}{2}(u_t^n+u_t^{n-1})\Big)^2dX\\ \nonumber
&=&\int_{\Omega}\frac{1}{4}\left(\int_{t^{n-1}}^{t^{n-1/2}}u_{ttt}(t-t^{n-1})dt+\int_{t^{n-1/2}}^{t^{n}}u_{ttt}(t^n-t)dt\right)^2dX\\
&\leq& C(\Delta t)^3\int_{t^{n-1}}^{t^{n}}\|u_{ttt}\|^2dt, \label{eq:CN_25}
\end{eqnarray}
and
\begin{eqnarray}\nonumber
\|r_2^n\|^2&=&\int_{\Omega}\Big(u_t^{n-1/2}-\partial_t u^n)\Big)^2dX\\ \nonumber
&=&\int_{\Omega}\frac{1}{(\Delta t)^2}\left(\int_{t^{n-1}}^{t^{n-1/2}}u_{ttt}(t-t^{n-1})^2dt+\int_{t^{n-1/2}}^{t^{n}}u_{ttt}(t^n-t)^2dt\right)^2dX\\
&\leq& C(\Delta t)^3\int_{t^{n-1}}^{t^{n}}\|u_{ttt}\|^2dt. \label{eq:CN_26}
\end{eqnarray}
Using \eqref{eq:Euler_5}, \eqref{eq:CN_25} and \eqref{eq:CN_26} in \eqref{eq:CN_24} yields
\[
\|\xi^k\|_h^2\leq C\Big(h^2\|u_t\|^2_{L^2(0,T;\tilde H^3(\Omega))}+(\Delta t)^4\|u_{ttt}\|^2_{L^2(0,T;L^2(\Omega))}\Big).
\]
% ------------------------$\epsilon=0$ or 1---------------------------------
%\par
%When $\epsilon=0$ or 1, we have
%\begin{eqnarray*}
%\frac{1}{2\Delta t}a_\epsilon(\xi^{n}+\xi^{n-1},\xi^{n}-\xi^{n-1})&=&\frac{1}{2\Delta t}[a_\epsilon(\xi^{n},\xi^{n})-a_\epsilon(\xi^{n-1},\xi^{n-1})]\\
%&&+\frac{1}{2}[a_\epsilon(\xi^{n-1},\partial_t\xi^{n})-a_\epsilon(\partial_t\xi^{n},\xi^{n-1})]\\
%&\geq& \frac{1}{2\Delta t}[a_\epsilon(\xi^{n},\xi^{n})-a_\epsilon(\xi^{n-1},\xi^{n-1})]-\frac{1}{2}C\|\partial_t\xi^{n}\|_h\|\xi^{n-1}\|_h.
%\end{eqnarray*}
%Then, we obtain
%\begin{eqnarray}
%\kappa\|\xi^k\|_h^2\leq a_\epsilon(\xi^{k},\xi^{k})&\leq&C\sum\limits_{n=1}^k\Delta t\Big(\|\xi^{n}\|_h^2+\|\partial_t \eta^n\|^2
%+\|r_1^n\|^2+\|r_2^n\|^2\Big)+C\sum\limits_{n=1}^k\Delta t\|\partial_t\xi^{n}\|_h^2.
%\end{eqnarray}
%We now bound the last term on the right hand side of (5.27). From \eqref{eq:Euler_21}, for $n\geq 2$, we get
% \begin{eqnarray}
%&&\left(\partial_{tt} \xi^n,
%v_h\right)+\frac{1}{2}a_\epsilon(v_h,\partial_t\xi^{n}+\partial_t\xi^{n-1})=(\partial_{tt} \eta^n,
%v_h)+(\partial_tr_1^n, v_h)+(\partial_tr_2^n, v_h).
%\end{eqnarray}
%Choosing $v_h=\partial_t\xi^{n}+\partial_t\xi^{n-1}$, we obtain
% \begin{eqnarray}
%&&\frac{1}{2\Delta t}\left(\|\partial_{t} \xi^n\|^2-\|\partial_{t} \xi^{n-1}\|^2\right)
%+\frac{1}{2}a_\epsilon(v_h,\partial_t\xi^{n}+\partial_t\xi^{n-1})=(\partial_{tt} \eta^n,
%v_h)+(\partial_tr_1^n, v_h)+(\partial_tr_2^n, v_h).
%\end{eqnarray}
% ------------------------$\epsilon=0$ or 1---------------------------------
Finally, we obtain an estimate for $u^k - u_h^k$ by applying the above estimate for $\xi^k$, the triangle inequality and \eqref{eq:ell_proj_est_1} to
the splitting $u^k - u_h^k = \eta^k - \xi^k$, and we summarize the result in the following theorem.\\

\noindent
\begin{theorem}
Assume that the exact solution $u$ to the parabolic interface problem \eqref{eq:parab_eq}-\eqref{eq:parab_eq_jump_2} is in $H^1(0,T;\tilde H^3(\Omega))\cap H^3(0,T;L^2(\Omega))$ and $u_0\in \tilde H^3(\Omega)$. Assume the sequence $\Big\{u_h^n\Big\}_{n=0}^{N_t}$ is the solution to
the PPIFE Crank-Nicolson method \eqref{eq:DG-IFE_full_disc}-\eqref{eq:DG-IFE_full_disc_ic} with $\epsilon=-1$. Then, there exists a positive constant $C$ independent of $h$ and $\Delta t$ such that
\begin{equation}\label{eq: CN_error}
\max_{0\leq n\leq N_t}\|u^n-u_h^n\|_h\leq C\Big(h(\|u_0\|_{\tilde
H^3(\Omega)}+\|u_t\|_{L^2(0,T;\tilde
H^3(\Omega))})+(\Delta t)^2\|u_{ttt}\|_{L^2(0,T;L^2(\Omega))}\Big).
\end{equation}
\end{theorem}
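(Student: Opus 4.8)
The plan is to follow the now-standard Wheeler--Nitsche elliptic-projection splitting used in the semi-discrete and backward Euler analyses, and adapt it to the Crank--Nicolson time-stepping. I would write $u^n - u_h^n = \eta^n - \xi^n$ with $\eta = u - \tilde u_h$ the elliptic projection error and $\xi = u_h - \tilde u_h \in S_h(\Omega)$. The $\eta^k$ contribution is already controlled by Lemma~3.4, giving $\|\eta^k\|_h \le Ch\|u^k\|_{\tilde H^3(\Omega)} \le Ch(\|u_0\|_{\tilde H^3(\Omega)} + \|u_t\|_{L^2(0,T;\tilde H^3(\Omega))})$. So the whole task reduces to bounding $\|\xi^k\|_h$, which is exactly equation \eqref{eq:CN_24}.

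Next I would derive the error equation \eqref{eq:CN_21} by subtracting the Crank--Nicolson scheme \eqref{eq:DG-IFE_full_disc} (with $\theta = 1/2$) from the weak form \eqref{eq:weak_form} evaluated at $t^{n-1/2}$, using the elliptic projection property \eqref{eq:ellip_proj} to eliminate $a_\epsilon(\eta^n + \eta^{n-1}, v_h)$; this produces the two consistency residuals $r_1^n$ (the difference between $u_t$ at the midpoint and the average of the endpoint time-derivatives) and $r_2^n$ (the difference between $u_t^{n-1/2}$ and the divided difference $\partial_t u^n$), together with the term $(\partial_t \eta^n, v_h)$. Testing with $v_h = \partial_t \xi^n = (\xi^n - \xi^{n-1})/\Delta t$ and applying Cauchy--Schwarz and Young gives \eqref{eq:CN_22}. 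Here the crucial point, and the reason the hypothesis $\epsilon = -1$ appears, is the telescoping identity: when $a_\epsilon$ is symmetric, $a_\epsilon(\xi^n + \xi^{n-1}, \xi^n - \xi^{n-1}) = a_\epsilon(\xi^n,\xi^n) - a_\epsilon(\xi^{n-1},\xi^{n-1})$, so multiplying \eqref{eq:CN_23} by $2\Delta t$ and summing from $n=1$ to $k$ telescopes the energy term and, since $\xi^0 = 0$ by the choice of initial data, yields $a_\epsilon(\xi^k,\xi^k) \le C\sum_{n=1}^k \Delta t(\|\partial_t\eta^n\|^2 + \|r_1^n\|^2 + \|r_2^n\|^2)$; coercivity (Lemma~3.3) then gives $\kappa\|\xi^k\|_h^2$ on the left, i.e. \eqref{eq:CN_24}. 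Note that because the left side is already a clean telescoped energy, \emph{no Gronwall inequality is needed} in the symmetric case — this is why the Crank--Nicolson theorem is only stated for $\epsilon = -1$, whereas the nonsymmetric cases would require the extra commutator-estimate machinery seen in the backward Euler proof.

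Then I would estimate the three residual sums. For $\|\partial_t\eta^n\|^2$, writing $\partial_t\eta^n = \frac{1}{\Delta t}\int_{t^{n-1}}^{t^n}\eta_t\,d\tau$ and using Cauchy--Schwarz in time plus \eqref{eq:ell_proj_est_2} gives exactly \eqref{eq:Euler_5}, so $\sum_{n=1}^k \Delta t\|\partial_t\eta^n\|^2 \le Ch^2\|u_t\|_{L^2(0,T;\tilde H^3(\Omega))}^2$. For $r_1^n$ and $r_2^n$ I would Taylor-expand about $t^{n-1/2}$: the midpoint-rule and trapezoid-rule errors both involve $u_{ttt}$, and writing each as an integral of $u_{ttt}$ against a bounded kernel over $[t^{n-1},t^n]$, Cauchy--Schwarz yields $\|r_i^n\|^2 \le C(\Delta t)^3\int_{t^{n-1}}^{t^n}\|u_{ttt}\|^2\,dt$, hence $\sum_{n=1}^k \Delta t\|r_i^n\|^2 \le C(\Delta t)^4\|u_{ttt}\|_{L^2(0,T;L^2(\Omega))}^2$ — this is the source of the second-order-in-$\Delta t$ term, and getting the kernel estimate sharp enough to see $(\Delta t)^4$ rather than $(\Delta t)^2$ is the one genuinely delicate computation. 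Combining these in \eqref{eq:CN_24} gives $\|\xi^k\|_h^2 \le C(h^2\|u_t\|_{L^2(0,T;\tilde H^3(\Omega))}^2 + (\Delta t)^4\|u_{ttt}\|_{L^2(0,T;L^2(\Omega))}^2)$. Finally the triangle inequality applied to $u^k - u_h^k = \eta^k - \xi^k$, together with the $\eta^k$ bound above and $\sqrt{a^2+b^2} \le a + b$, produces \eqref{eq: CN_error}, uniformly in $k$. The main obstacle, as noted, is purely the Taylor-remainder bookkeeping for $r_1^n, r_2^n$; structurally the proof is shorter than the backward Euler one precisely because symmetry lets the energy telescope without Gronwall.
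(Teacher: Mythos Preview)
Your proposal is correct and follows essentially the same route as the paper's own proof: the elliptic-projection splitting, the error equation \eqref{eq:CN_21}, testing with $\partial_t\xi^n$, the symmetry-driven telescoping of $a_\epsilon(\xi^n+\xi^{n-1},\xi^n-\xi^{n-1})$, and the Taylor-remainder estimates \eqref{eq:CN_25}--\eqref{eq:CN_26} for $r_1^n,r_2^n$ are exactly what the paper does. Your observation that no Gronwall argument is needed in the symmetric case, and that this is what distinguishes the $\epsilon=-1$ result from the nonsymmetric backward Euler analysis, is also on the mark.
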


\begin{remark}
The choice of $\epsilon = -1$ for the PPIFE Crank-Nicolson method is very natural because the method inherits the symmetry from the interface problem and
its algebraic system is easier to solve. On the other hand, even though the non-symmetric PPIFE Crank-Nicolson methods based on the other two choices of $\epsilon = 0$ and
$\epsilon = 1$ also seem to work well as demonstrated by the numerical results in the next section, the asymmetry in their bilinear forms hinders the
estimation of several key terms in the error analysis so that the related convergence still remains elusive.
\end{remark}

\begin{remark} We can replace the bilinear form $a_\epsilon(\cdot, \cdot)$ with the one used in the standard interior penalty DG finite element
methods to obtain corresponding DGIFE methods for the parabolic interface problems. Furthermore, the error estimation for PPIFE methods can also be readily extended to the corresponding DGIFE methods. However, as usual, these DGIFE methods have much more unknowns than the PPIFE counterparts; hence they are less favorable unless features in DG formulation are desired.
\end{remark}

\section{Numerical Examples}
\setcounter{equation}{0}

\noindent
In this section, we present some numerical results to demonstrate features of PPIFE methods for parabolic interface problems. \vspace{2mm}

Let the solution domain be $\Omega=(0,1)\times(0,1)$ and the time interval be $[0,1]$. The interface curve $\Gamma$ is chosen to be an ellipse centered at the point $(x_0,y_0)$ with semi-radius $a$ and $b$, whose parametric form can be written as
\begin{equation}\label{eq: ellipse parametric form}
  \left\{
    \begin{array}{ll}
      x = x_0 + a\cos(\theta), \\
      y = y_0 + b\sin(\theta).
    \end{array}
  \right.
\end{equation}
In our numerical experiments, we choose $x_0 = y_0 = 0$, $a = \pi/4$, $b=\pi/6$, and $\theta\in [0,\pi/2]$.
%We assume that the center of the ellipse is at origin, \emph{i.e.}, $x_0 = y_0 = 0$, so that the ellipse interface is symmetric to both $x$ and $y$ axes and we only %consider the ellipse in first quadrant, \emph{i.e.}, $\theta\in (0,\pi/2)$.
The interface $\Gamma$ separates $\Omega$ into two sub-domains $\Omega^- = \{(x,y):r(x,y)<1\}$ and $\Omega^+= \{(x,y):r(x,y)>1\}$
where
\begin{equation*}
  r(x,y) = \sqrt{\frac{(x-x_0)^2}{a^2} + \frac{(y-y_0)^2}{b^2}}.
\end{equation*}
The exact solution for the parabolic interface problem is chosen to be
\begin{equation}\label{eq: true solution}
    u(t,x,y) =
    \left\{
      \begin{array}{ll}
        \frac{1}{\beta^-}r^p e^t, \quad& \text{if~} (x,y)\in \Omega^- , \\
        \left(\frac{1}{\beta^+}r^p - \frac{1}{\beta^+} + \frac{1}{\beta^-}\right) e^t, \quad& \text{if~} (x,y)\in \Omega^+,
      \end{array}
    \right.
\end{equation}
where $p = 5$ and the diffusion coefficients $\beta^\pm$ vary in different numerical experiments.

We use a family of  Cartesian meshes $\{\mathcal{T}_h, h>0\}$, and each mesh is formed by partitioning $\Omega$ into $N_s\times N_s$ congruent squares of size $h = 1/N_s$ for a set of values of integer $N_s$. For fully discretized methods, we divide the time interval $[0,1]$ into $N_t$ subintervals uniformly with $t^n = n\Delta t$, $n=0,1,\cdots, N_t$, and $\Delta t = 1/N_t$. Also, we have observed that the condition numbers of the matrices associated with the bilinear forms in these IFE methods is proportional to
$h^{-2}$, similar to that of the standard finite element method; therefore, usual solvers can be applied to efficiently solve the sparse linear system in these IFE methods.

\par
First, we consider the case in which the diffusion coefficient $(\beta^-,\beta^+) = (1,10)$ representing a moderate discontinuity across the interface. Both nonsymmetric ($\epsilon = 1$) and symmetric ($\epsilon = -1$) PPIFE methods are  employed to solve the parabolic interface problem. For penalty parameters, we choose $\sigma_B^0 = 1$ for the nonsymmetric method and $\sigma_B^0 = 100$ for the symmetric method, while $\alpha = 1$ for both methods. Both backward Euler and Crank-Nicolson methods are employed and the time step is chosen as $\Delta t = 2h$. Errors of nonsymmetric and symmetric PPIFE backward Euler methods in $L^\infty$, $L^2$ and semi-$H^1$ norms are listed in Table \ref{table: NPPIFE error 1 10 tau h BE exp} and Table \ref{table: SPPIFE error 1 10 tau h BE exp}, respectively. Errors of nonsymmetric and symmetric PPIFE Crank-Nicolon methods are listed in Table \ref{table: NPPIFE error 1 10 tau h CN exp} and Table \ref{table: SPPIFE error 1 10 tau h CN exp}, respectively. All errors are computed at the final time level, \emph{i.e.} $t=1$.

\begin{table}[htb]
\begin{center}
\begin{tabular}{|c|cc|cc|cc|}
\hline
$h$
& $\|\cdot\|_{L^\infty}$ & rate
& $\|\cdot\|_{L^2}$ & rate
& $|\cdot|_{H^1}$ & rate \\
\hline
$1/10$  &$2.7866E{-2}$&        &$8.2619E{-2}$&        &$2.1079E{-0}$&         \\
$1/20$  &$7.9371E{-3}$& 1.8118 &$2.0935E{-2}$& 1.9805 &$1.0659E{-0}$& 0.9838  \\
$1/40$  &$2.6530E{-3}$& 1.5810 &$5.3984E{-3}$& 1.9553 &$5.3875E{-1}$& 0.9844  \\
$1/80$  &$8.9636E{-4}$& 1.5655 &$1.4473E{-3}$& 1.8991 &$2.7065E{-1}$& 0.9932 \\
$1/160$ &$3.3405E{-4}$& 1.4240 &$4.1586E{-4}$& 1.7992 &$1.3567E{-1}$& 0.9963 \\
$1/320$ &$1.3871E{-4}$& 1.2680 &$1.3204E{-4}$& 1.6551 &$6.7927E{-2}$& 0.9980  \\
$1/640$ &$6.2344E{-5}$& 1.1538 &$4.7909E{-5}$& 1.4626 &$3.3986E{-2}$& 0.9990 \\
$1/1280$&$2.9411E{-5}$& 1.0839 &$1.9763E{-5}$& 1.2775 &$1.6998E{-2}$& 0.9996  \\
\hline
\end{tabular}
\end{center}
\caption{Errors of nonsymmetric PPIFE backward Euler solutions with $\beta^- = 1$, $\beta^+ = 10$ at time $t=1$}
\label{table: NPPIFE error 1 10 tau h BE exp}
\end{table}

\begin{table}[htb]
\begin{center}
\begin{tabular}{|c|cc|cc|cc|}
\hline
$h$
& $\|\cdot\|_{L^\infty}$ & rate
& $\|\cdot\|_{L^2}$ & rate
& $|\cdot|_{H^1}$ & rate \\
\hline
$1/10$  &$6.6821E{-2}$&        &$8.1952E{-2}$&        &$2.1051E{-0}$&         \\
$1/20$  &$1.5332E{-2}$& 2.1237 &$2.1070E{-2}$& 1.9596 &$1.0654E{-0}$& 0.9826  \\
$1/40$  &$5.1586E{-3}$& 1.5715 &$5.4326E{-3}$& 1.9554 &$5.3876E{-1}$& 0.9836  \\
$1/80$  &$1.5387E{-3}$& 1.7453 &$1.4582E{-3}$& 1.8974 &$2.7067E{-1}$& 0.9931 \\
$1/160$ &$4.9034E{-4}$& 1.6498 &$4.1727E{-4}$& 1.8052 &$1.3567E{-1}$& 0.9964 \\
$1/320$ &$1.7632E{-4}$& 1.4755 &$1.3212E{-4}$& 1.6591 &$6.7927E{-2}$& 0.9980  \\
$1/640$ &$7.1949E{-5}$& 1.2932 &$4.7927E{-5}$& 1.4630 &$3.3986E{-2}$& 0.9990 \\
$1/1280$&$3.1775E{-5}$& 1.1791 &$1.9763E{-5}$& 1.2780 &$1.6998E{-2}$& 0.9996  \\
\hline
\end{tabular}
\end{center}
\caption{Errors of symmetric PPIFE backward Euler solutions with $\beta^- = 1$, $\beta^+ = 10$ at time $t=1$}
\label{table: SPPIFE error 1 10 tau h BE exp}
\end{table}

\begin{table}[htb]
\begin{center}
\begin{tabular}{|c|cc|cc|cc|}
\hline
$h$
& $\|\cdot\|_{L^\infty}$ & rate
& $\|\cdot\|_{L^2}$ & rate
& $|\cdot|_{H^1}$ & rate \\
\hline
$1/10$  &$5.1829E{-2}$&        &$9.3610E{-2}$&        &$2.1106E{-0}$&         \\
$1/20$  &$1.0369E{-2}$& 2.3215 &$2.2475E{-2}$& 2.0583 &$1.0658E{-0}$& 0.9857  \\
$1/40$  &$2.8024E{-3}$& 1.8875 &$5.6292E{-3}$& 1.9973 &$5.3870E{-1}$& 0.9843  \\
$1/80$  &$7.1649E{-4}$& 1.9676 &$1.4091E{-3}$& 1.9982 &$2.7063E{-1}$& 0.9931 \\
$1/160$ &$1.7881E{-4}$& 2.0026 &$3.5445E{-4}$& 1.9911 &$1.3566E{-1}$& 0.9963 \\
$1/320$ &$4.5518E{-5}$& 1.9739 &$8.8742E{-5}$& 1.9979 &$6.7926E{-2}$& 0.9980  \\
$1/640$ &$1.1447E{-5}$& 1.9914 &$2.2156E{-5}$& 2.0019 &$3.3986E{-2}$& 0.9990 \\
$1/1280$&$2.8833E{-6}$& 1.9892 &$5.5375E{-7}$& 2.0004 &$1.6998E{-2}$& 0.9996  \\
\hline
\end{tabular}
\end{center}
\caption{Errors of nonsymmetric PPIFE Crank-Nicolson solutions with $\beta^- = 1$, $\beta^+ = 10$ at time $t=1$}
\label{table: NPPIFE error 1 10 tau h CN exp}
\end{table}

\begin{table}[htb]
\begin{center}
\begin{tabular}{|c|cc|cc|cc|}
\hline
$h$
& $\|\cdot\|_{L^\infty}$ & rate
& $\|\cdot\|_{L^2}$ & rate
& $|\cdot|_{H^1}$ & rate \\
\hline
$1/10$  &$1.0310E{-1}$&        &$9.2384E{-2}$&        &$2.1112E{-0}$&         \\
$1/20$  &$1.4252E{-2}$& 2.8447 &$2.2543E{-2}$& 2.0350 &$1.0650E{-0}$& 0.9872  \\
$1/40$  &$4.2963E{-3}$& 1.7401 &$5.6546E{-3}$& 1.9952 &$5.3862E{-1}$& 0.9836  \\
$1/80$  &$1.0893E{-4}$& 1.9796 &$1.4190E{-3}$& 1.9946 &$2.7062E{-1}$& 0.9930 \\
$1/160$ &$2.8178E{-4}$& 1.9508 &$3.5605E{-4}$& 1.9947 &$1.3566E{-1}$& 0.9963 \\
$1/320$ &$6.6903E{-5}$& 2.0744 &$8.9021E{-5}$& 1.9999 &$6.7924E{-2}$& 0.9980  \\
$1/640$ &$1.6838E{-5}$& 1.9903 &$2.2251E{-5}$& 2.0003 &$3.3985E{-2}$& 0.9990 \\
$1/1280$&$4.1878E{-6}$& 2.0075 &$5.5633E{-6}$& 1.9999 &$1.6998E{-2}$& 0.9996  \\
\hline
\end{tabular}
\end{center}
\caption{Errors of symmetric PPIFE Crank-Nicolson solutions with $\beta^- = 1$, $\beta^+ = 10$ at time $t=1$}
\label{table: SPPIFE error 1 10 tau h CN exp}
\end{table}

In Table \ref{table: NPPIFE error 1 10 tau h BE exp} and Table \ref{table: SPPIFE error 1 10 tau h BE exp}, we note that errors in semi-$H^1$ norms for both nonsymmetric and symmetric PPIFE backward Euler methods demonstrate an optimal convergence rate $O(h) + O(\Delta t)$, which confirms our error estimates \eqref{eq:Euler_19} and \eqref{eq:Euler_20}. Also note that the order of convergence in $L^2$ norm approaches $1$ as we perform uniform mesh refinement. This is consistent with our expectation of the order of convergence $O(h^2) + O(\Delta t)$ in $L^2$ norm although such an error bound has not been established yet. Errors gauged in $L^\infty$ norm indicate a first order convergence for backward Euler method.

In Table \ref{table: NPPIFE error 1 10 tau h CN exp} and Table \ref{table: SPPIFE error 1 10 tau h CN exp}, the convergence rate in semi-$H^1$ norm confirms our error estimate \eqref{eq: CN_error} for Crank-Nicolson method. Moreover, errors in $L^2$ norm is of second order convergence which agrees with our anticipated convergence rate $O(h^2) + O(\Delta t^2)$. Errors in $L^\infty$ norm also seem to maintain an optimal second order convergence.

\begin{table}[ht]
\begin{center}
\begin{tabular}{|c|cc|cc|cc|}
\hline
$h$
& $\|\cdot\|_{L^\infty}$ & rate
& $\|\cdot\|_{L^2}$ & rate
& $|\cdot|_{H^1}$ & rate \\
\hline
$1/10$  &$1.4637E{-1}$&        &$4.7718E{-2}$&        &$1.1268E{-0}$&         \\
$1/20$  &$6.4974E{-2}$& 1.1717 &$1.6100E{-2}$& 1.5675 &$5.9288E{-1}$& 0.9265  \\
$1/40$  &$2.2137E{-2}$& 1.5534 &$4.3284E{-3}$& 1.8951 &$3.0548E{-1}$& 0.9567  \\
$1/80$  &$7.2728E{-3}$& 1.6059 &$8.4067E{-4}$& 2.3642 &$1.5187E{-1}$& 1.0083 \\
$1/160$ &$2.3746E{-3}$& 1.6148 &$2.0844E{-4}$& 2.0119 &$7.5576E{-2}$& 1.0068 \\
$1/320$ &$1.0006E{-3}$& 1.2468 &$5.2912E{-5}$& 1.9779 &$3.7807E{-2}$& 0.9993  \\
$1/640$ &$1.7030E{-4}$& 2.5547 &$1.4993E{-5}$& 1.8193 &$1.8900E{-2}$& 1.0002 \\
$1/1280$&$6.3452E{-5}$& 1.4244 &$4.9410E{-6}$& 1.6014 &$9.4461E{-3}$& 1.0006  \\
\hline
\end{tabular}
\end{center}
\caption{Errors of nonsymmetric PPIFE backward Euler solutions with $\beta^- = 1$, $\beta^+ = 10000$ at time $t=1$}
\label{table: NPPIFE error 1 10000 tau h BE exp}
\end{table}

%\begin{table}[htb]
%\begin{center}
%\begin{tabular}{|c|cc|cc|cc|}
%\hline
%$h$
%& $\|\cdot\|_{L^\infty}$ & rate
%& $\|\cdot\|_{L^2}$ & rate
%& $\|\cdot\|_{H^1}$ & rate \\
%\hline
%$1/10$  &$2.7030E{-1}$&        &$6.5521E{-2}$&        &$1.2234E{-0}$&         \\
%$1/20$  &$1.6665E{-1}$& 0.6978 &$2.9954E{-2}$& 1.1292 &$7.0519E{-1}$& 0.7949  \\
%$1/40$  &$9.8540E{-2}$& 0.7580 &$1.5973E{-2}$& 0.9071 &$4.6960E{-1}$& 0.5866  \\
%$1/80$  &$1.7239E{-2}$& 2.5150 &$2.0583E{-3}$& 2.9561 &$1.7134E{-1}$& 1.4546 \\
%$1/160$ &$6.5942E{-3}$& 1.3864 &$4.3471E{-4}$& 2.2433 &$7.9572E{-2}$& 1.1065 \\
%$1/320$ &$1.8538E{-3}$& 1.8307 &$8.1592E{-5}$& 2.4136 &$3.8611E{-2}$& 1.0433  \\
%$1/640$ &$4.7481E{-4}$& 1.9651 &$1.8032E{-5}$& 2.1779 &$1.9083E{-2}$& 1.0167 \\
%\hline
%\hline
%$1/1280$&$6.3452E{-5}$& 1.4244 &$4.9410E{-6}$& 1.6014 &$9.4461E{-3}$& 1.0006  \\
%\hline
%\end{tabular}
%\end{center}
%\caption{Errors of SPPIFE backward Euler solutions with $\beta^- = 1$, $\beta^+ = 10000$ at time $t=1$}
%\label{table: SPPIFE error 1 10000 tau h BE exp}
%\end{table}

\begin{table}[ht]
\begin{center}
\begin{tabular}{|c|cc|cc|cc|}
\hline
$h$
& $\|\cdot\|_{L^\infty}$ & rate
& $\|\cdot\|_{L^2}$ & rate
& $|\cdot|_{H^1}$ & rate \\
\hline
$1/10$  &$1.9919E{-1}$&        &$5.2179E{-2}$&        &$1.1724E{-0}$&         \\
$1/20$  &$4.8082E{-2}$& 2.0506 &$1.5609E{-2}$& 1.7411 &$5.7800E{-1}$& 1.0204  \\
$1/40$  &$1.4716E{-2}$& 1.7081 &$4.2141E{-3}$& 1.8890 &$2.9879E{-1}$& 0.9519  \\
$1/80$  &$5.0467E{-3}$& 1.5439 &$8.1261E{-4}$& 2.3746 &$1.4997E{-1}$& 0.9945 \\
$1/160$ &$1.6228E{-3}$& 1.6368 &$1.9588E{-4}$& 2.0526 &$7.5188E{-2}$& 0.9961 \\
$1/320$ &$6.8515E{-3}$& 1.2440 &$4.5716E{-5}$& 2.0992 &$3.7698E{-2}$& 0.9960  \\
$1/640$ &$1.2256E{-4}$& 2.4830 &$1.0915E{-5}$& 2.0664 &$1.8871E{-2}$& 0.9983 \\
$1/1280$&$4.4326E{-5}$& 1.4672 &$2.6715E{-6}$& 2.0306 &$9.4387E{-3}$& 0.9995  \\
\hline
\end{tabular}
\end{center}
\caption{Errors of nonsymmetric PPIFE Crank-Nicolson solutions with $\beta^- = 1$, $\beta^+ = 10000$ at time $t=1$}
\label{table: NPPIFE error 1 10000 tau h CN exp}
\end{table}

Next, we consider a larger discontinuity in the diffusion coefficient by choosing $(\beta^-,\beta^+) = (1,10000)$. The nonsymmetric PPIFE method is used for spatial discretization in the experiment. We choose the penalty parameter $\sigma_B^0 = 1$ again for this large discontinuity case, since the coercivity bound is valid for any positive $\sigma_B^0$.  Table \ref{table: NPPIFE error 1 10000 tau h BE exp} and Table \ref{table: NPPIFE error 1 10000 tau h CN exp} contain errors in backward Euler and Crank-Nicolson methods, respectively. Again, we observe that errors in semi-$H^1$ norm have an optimal convergence rate through mesh refinement for both methods. The convergence rate in $L^2$ norm is second order for Crank-Nicolson and first order for backward Euler. For symmetric PPIFE methods, we have observed similar behavior to the nonsymmetric methods provided that the penalty parameter $\sigma_B^0$ is large enough.
%The errors gauged in the $L^\infty$ norm do not have a clear pattern.

\bibliographystyle{abbrv}

\begin{thebibliography}{}

\end{thebibliography}


\begin{thebibliography}{10}

\bibitem{Babuska_Elliptic_Discontinuous}
I.~Babu{\v{s}}ka.
\newblock The finite element method for elliptic equations with discontinuous
  coefficients.
\newblock {\em Computing (Arch. Elektron. Rechnen)}, 5:207--213, 1970.

\bibitem{Babuska_Osborn_Bad_FEM}
I.~Babu{\v{s}}ka and J.~E. Osborn.
\newblock Can a finite element method perform arbitrarily badly?
\newblock {\em Math. Comp.}, 69(230):443--462, 2000.

\bibitem{JBramble_JKing_FEM_Interface}
J.~H. Bramble and J.~T. King.
\newblock A finite element method for interface problems in domains with smooth
  boundaries and interfaces.
\newblock {\em Adv. Comput. Math.}, 6(2):109--138, 1996.

\bibitem{ZChen_FEM}
Z.~Chen.
\newblock {\em Finite element methods and their applications}.
\newblock Scientific Computation. Springer-Verlag, Berlin, 2005.

\bibitem{ZChen_JZou_FEM_Elliptic}
Z.~Chen and J.~Zou.
\newblock Finite element methods and their convergence for elliptic and
  parabolic interface problems.
\newblock {\em Numer. Math.}, 79(2):175--202, 1998.

\bibitem{Chou_Kwak_Wee_IFE_Triangle_Analysis}
S.-H. Chou, D.~Y. Kwak, and K.~T. Wee.
\newblock Optimal convergence analysis of an immersed interface finite element
  method.
\newblock {\em Adv. Comput. Math.}, 33(2):149--168, 2010.

\bibitem{XHe_Thesis_Bilinear_IFE}
X.~He.
\newblock {\em Bilinear immersed finite elements for interface problems}.
\newblock PhD thesis, Virginia Polytechnic Institute and State University,
  2009.

\bibitem{XHe_TLin_YLin_Bilinear_Approximation}
X.~He, T.~Lin, and Y.~Lin.
\newblock Approximation capability of a bilinear immersed finite element space.
\newblock {\em Numer. Methods Partial Differential Equations},
  24(5):1265--1300, 2008.

\bibitem{XHe_TLin_YLin_Convergence_IFE}
X.~He, T.~Lin, and Y.~Lin.
\newblock The convergence of the bilinear and linear immersed finite element
  solutions to interface problems.
\newblock {\em Numer. Methods Partial Differential Equations}, 28(1):312--330,
  2012.

\bibitem{XHe_TLin_YLin_XZhang_Moving_CNIFE}
X.~He, T.~Lin, Y.~Lin, and X.~Zhang.
\newblock Immersed finite element methods for parabolic equations with moving
  interface.
\newblock {\em Numer. Methods Partial Differential Equations}, 29(2):619--646,
  2013.

\bibitem{Hesthaven_Warburton_Nodal_DG}
J.~S. Hesthaven and T.~Warburton.
\newblock {\em Nodal discontinuous {G}alerkin methods}, volume~54 of {\em Texts
  in Applied Mathematics}.
\newblock Springer, New York, 2008.
\newblock Algorithms, analysis, and applications.

\bibitem{ZLi_IIM_FE}
Z.~Li.
\newblock The immersed interface method using a finite element formulation.
\newblock {\em Appl. Numer. Math.}, 27(3):253--267, 1998.

\bibitem{ZLi_TLin_YLin_RRogers_linear_IFE}
Z.~Li, T.~Lin, Y.~Lin, and R.~C. Rogers.
\newblock An immersed finite element space and its approximation capability.
\newblock {\em Numer. Methods Partial Differential Equations}, 20(3):338--367,
  2004.

\bibitem{ZLi_TLin_XWu_Linear_IFE}
Z.~Li, T.~Lin, and X.~Wu.
\newblock New {C}artesian grid methods for interface problems using the finite
  element formulation.
\newblock {\em Numer. Math.}, 96(1):61--98, 2003.

\bibitem{TLin_YLin_RRogers_MRyan_Rectangle}
T.~Lin, Y.~Lin, R.~Rogers, and M.~L. Ryan.
\newblock A rectangular immersed finite element space for interface problems.
\newblock In {\em Scientific computing and applications ({K}ananaskis, {AB},
  2000)}, volume~7 of {\em Adv. Comput. Theory Pract.}, pages 107--114. Nova
  Sci. Publ., Huntington, NY, 2001.

\bibitem{TLin_YLin_XZhang_MoL_Nonhomo}
T.~Lin, Y.~Lin, and X.~Zhang.
\newblock Immersed finite element method of lines for moving interface problems
  with nonhomogeneous flux jump.
\newblock In {\em Recent advances in scientific computing and applications},
  volume 586 of {\em Contemp. Math.}, pages 257--265. Amer. Math. Soc.,
  Providence, RI, 2013.

\bibitem{TLin_YLin_XZhang_IFE_MoL}
T.~Lin, Y.~Lin, and X.~Zhang.
\newblock A method of lines based on immersed finite elements for parabolic
  moving interface problems.
\newblock {\em Adv. Appl. Math. Mech.}, 5(4):548--568, 2013.

\bibitem{TLin_YLin_XZhang_PPIFE_Elliptic}
T.~Lin, Y.~Lin, and X.~Zhang.
\newblock Partially penalized immersed finite element methods for elliptic
  interface problems.
\newblock {\em SIAM J. Numer. Anal.}, 2014.
\newblock (accepted).

\bibitem{TLin_DSheen_IFE_Laplace}
T.~Lin and D.~Sheen.
\newblock The immersed finite element method for parabolic problems using the
  {L}aplace transformation in time discretization.
\newblock {\em Int. J. Numer. Anal. Model.}, 10(2):298--313, 2013.

\bibitem{TLin_DSheen_XZhang_RQ1_IFE_Elasiticity}
T.~Lin, D.~Sheen, and X.~Zhang.
\newblock A locking-free immersed finite element method for planar elasticity
  interface problems.
\newblock {\em J. Comput. Phys.}, 247:228--247, 2013.

\bibitem{Riviere_DG_book}
B.~Rivi{\`e}re.
\newblock {\em Discontinuous {G}alerkin methods for solving elliptic and
  parabolic equations}, volume~35 of {\em Frontiers in Applied Mathematics}.
\newblock Society for Industrial and Applied Mathematics (SIAM), Philadelphia,
  PA, 2008.
\newblock Theory and implementation.

\bibitem{Wang_Wang_Yu_Immersed_EL_Interfaces}
K.~Wang, H.~Wang, and X.~Yu.
\newblock An immersed {E}ulerian-{L}agrangian localized adjoint method for
  transient advection-diffusion equations with interfaces.
\newblock {\em Int. J. Numer. Anal. Model.}, 9(1):29--42, 2012.

\bibitem{XZhang_PHDThesis}
X.~Zhang.
\newblock {\em Nonconforming {I}mmersed {F}inite {E}lement {M}ethods for
  {I}nterface {P}roblems}.
\newblock ProQuest LLC, Ann Arbor, MI, 2013.
\newblock Thesis (Ph.D.)--Virginia Polytechnic Institute and State University.

\end{thebibliography}

\end{document}